\documentclass[10pt,reqno]{article}

\usepackage[a4paper,margin=1.15in]{geometry}

\usepackage[english]{babel}
\usepackage{microtype}
\usepackage{titlesec}

\usepackage{amsmath, amsthm, amssymb, mathtools}
\usepackage{latexsym}

\usepackage[T1]{fontenc}
\usepackage{bbm}
\usepackage[cal=boondoxo]{mathalfa}

\usepackage{xspace}
\usepackage{setspace}
\usepackage{natbib}
\usepackage{tabularx}
\usepackage{stmaryrd}
\usepackage{dsfont}
\usepackage{euscript}
\usepackage{enumitem}
\usepackage{algpseudocode}
\usepackage{algorithm}

\usepackage{xcolor}
\usepackage[utf8]{inputenc}
\usepackage{xcolor}
\usepackage{makeidx}

\usepackage[bibliography=common]{apxproof}
\usepackage[normalem]{ulem}
\usepackage[
pdftex,
colorlinks=true,
linkcolor=blue!50!black,
citecolor=blue!50!black,
urlcolor=blue!50!black
]{hyperref}
\usepackage{cleveref}
\usepackage{authblk}
\newtheorem{theorem}{Theorem}[]
\newtheorem{lemma}[theorem]{Lemma}
\newtheorem{corollary}[theorem]{Corollary}
\newtheorem{proposition}[theorem]{Proposition}

\newtheorem{remark}{Remark}[]

\newcommand{\cH}{\mathcal{H}}

\newcommand{\cL}{\mathcal{L}}
\newcommand{\cM}{\mathcal{M}}

\newcommand{\cX}{\mathcal{X}}

\newcommand{\wt}{\widetilde}

\newcommand{\bbR}{\mathbb{R}}
\newcommand{\bbE}{\mathbb{E}}
\newcommand{\bbP}{\mathbb{P}}

\newcommand{\bbC}{\mathbb{C}}
\newcommand{\bbH}{\mathbb{H}}

\newcommand{\bfI}{\mathbf{I}}
\newcommand{\bfT}{\mathbf{T}}

\newcommand{\abs}[1]{| #1 |}

\newcommand{\Babs}[1]{\Big| #1 \Big|}

\newcommand{\norm}[1]{\| #1 \|}
\newcommand{\bnorm}[1]{\big\| #1 \big\|}

\newcommand{\LRnorm}[1]{\left\| #1 \right\|}

\newcommand{\la}{\langle}
\newcommand{\ra}{\rangle}
\newcommand{\Tr}{\operatorname{Tr}}
\newcommand{\HS}{\mathrm{HS}}

\newcommand{\Ran}{\mathrm{Ran}}

\newcommand{\spec}{\operatorname{spec}}

\definecolor{darkred}{RGB}{150,0,0}

\title{A short operator proof of Hoeffding inequalities\\for Markov chains}
\author[1]{Oleksii Kachaiev%
\thanks{MaLGa center, Dipartimento di Matematica, Università degli Studi di Genova, Genoa, Italy}}
\date{September 19, 2026}

\hypersetup{
  pdftitle={A short operator proof of Hoeffding inequalities for Markov chains},
  pdfauthor={Oleksii Kachaiev},
  pdfsubject={Concentration inequalities for Markov chains},
  pdfkeywords={
    Hoeffding inequality,
    Markov chains,
    concentration inequalities,
    absolute spectral gap,
    Markov operators,
    Leon--Perron operator,
    matrix concentration,
    random matrices
  },
  pdfdisplaydoctitle=true
}

\begin{document}

\maketitle

\begin{abstract}
We give a short operator-theoretic proof of the sharp Hoeffding inequality for additive functionals of a Markov chain on a general state space with an \( L^2(\pi) \) spectral gap, recovering the bound of~\citet{fan2021hoeffding}. We further show that the same argument yields the Hoeffding inequality of~\citet{neeman2024concentration} for Markov-dependent random matrices. The proof rests on two key observations. First, the transition operator factors as \(P = Q_\lambda^{1/2} K Q_\lambda^{1/2}\) through the Le{\'o}n--Perron operator \(Q_\lambda\), with \(\norm{K} = 1\). This bounds the moment generating function by a product of one-step operator norms. Second, each one-step operator is a rank-one perturbation of a multiplication operator, so bounding its norm reduces to verifying a scalar resolvent condition. Scalar convexity and the classical Hoeffding lemma then give the desired estimate. This bypasses the asymptotic cumulant generating function, the essential-spectrum analysis and the extremal two-state comparison used in existing proofs~\citep{leon2004optimal,miasojedow2014hoeffding,fan2021hoeffding}. In the matrix setting, the multi-matrix Golden--Thompson inequality first reduces the moment generating function to an operator problem on a lifted Hilbert space. The same two steps then apply: the projection onto constants becomes finite-rank, and a Schur-complement argument yields the same scalar estimate. The Hoeffding inequality for Markov-dependent random matrices of~\citet{neeman2024concentration} follows. For complex Hermitian summands, working directly on the complex Hilbert space improves the bound by a factor of \(2\).
\end{abstract}

\section{Introduction}

Let \( (X_t)_{t\geq 1} \) be a stationary Markov chain on a general measurable state space \( \cX \), with transition kernel \(P\) and invariant probability measure \( \pi \). Given a bounded measurable function \(f:\cX\to\bbR\), consider the additive functional
\begin{equation}
S_n(f):=\sum_{t=1}^n f(X_t).
\end{equation}
Under the usual ergodicity assumptions, the Markov-chain ergodic theorem gives \( n^{-1}S_n(f)\longrightarrow \pi(f) \; \text{a.s.} \); see, e.g.,~\citet{meyn2012markov,douc2018markov}. We ask instead for a non-asymptotic version of this statement: how large can the deviation \( S_n(f)-n\pi(f) \) be at a fixed time \(n\)?

\medskip
The particular result we revisit is~\citet[Theorem~1]{fan2021hoeffding}: if the Markov chain \( (X_t)_{t\geq 1} \) admits an absolute spectral gap \( 1 - \lambda > 0 \) on \( L^2(\pi) \), then for arbitrary bounded \(f_t: \cX \to [a_t,b_t]\) and \(\varepsilon > 0\),
\begin{equation}
\label{eq:intro-fan-bound}
\bbP_\pi\left\{ \sum_{t=1}^n f_t(X_t)- \sum_{t=1}^n \pi(f_t) \geq\varepsilon \right\}
\leq \exp\left( -\frac{1-\lambda}{1+\lambda} \frac{2\varepsilon^2}{\sum_{t=1}^n(b_t-a_t)^2} \right).
\end{equation}
The bound has the same form as the classical Hoeffding inequality for independent bounded random variables~\citep{boucheron2013concentration}, up to the dependence penalty \( (1-\lambda)/(1+\lambda) \) in the exponent. In particular, when the chain is i.i.d.\ with law \(\pi\), we have \(\lambda=0\) and so~\eqref{eq:intro-fan-bound} reduces exactly to the classical Hoeffding bound.

\medskip
The problem has been studied rather extensively. Following Hoeffding's original inequality~\citep{hoeffding1963probability} in the independent case, similar bounds for finite-state reversible Markov chains were developed using spectral methods by, among others, \citet{gillman1993hidden,dinwoodie1995probability}. A different extension under uniform ergodicity was obtained by~\citet{glynn2002hoeffding}. The sharp spectral result for finite reversible chains is due to~\citet{leon2004optimal}. Their argument identifies a two-state chain with the same stationary mean and spectral parameter and shows, through convex majorization, that this chain is extremal. This two-state comparison subsequently became a central ingredient of the sharp spectral approach. \citet{miasojedow2014hoeffding} reinterpreted the Le{\'o}n--Perron argument as an \(L^2(\pi)\) operator framework under a suitable spectral gap assumption to extend the result to Markov chains on a general state space. Related finite-state Chernoff--Hoeffding bounds for nonreversible chains were derived by~\citet{chung2012chernoff}.  Two further lines of work are close to the present one. \citet{paulin2015concentration} proves a version of McDiarmid's bounded-differences inequality for Markov chains with constants proportional to the mixing time, together with variance bounds and Bernstein-type inequalities for sums, and introduces the \emph{pseudo spectral gap},
which plays for nonreversible chains the role that the spectral gap plays for reversible ones.  \citet{rao2019hoeffding} gives a short proof of a Hoeffding-type inequality on a finite state space, for time-dependent observables and without assuming reversibility. The proof bounds even moments of the sum rather than using a two-state comparison; we return to that argument in Section~\ref{sec:comparison}. Finally, \citet{fan2021hoeffding} obtained the general-state-space result~\eqref{eq:intro-fan-bound} for arbitrary time-dependent bounded observables. For a single time-independent observable, they proposed replacing the absolute spectral parameter by the positive part of the right spectral endpoint, which recovers the sharp Le{\'o}n--Perron bound in the reversible case. Other results both on Hoeffding-type and more general exponential concentration inequalities for Markov chains have also been derived under different sets of assumptions; see, among others, \citet{kontoyiannis2005large,adamczak2015exponential}.

\medskip
The present note does not improve the scalar Hoeffding inequality or its dependence on the spectral gap; Theorem~\ref{thm:scalar-hoeffding} below is~\citet[Theorem~1]{fan2021hoeffding}. What we contribute is a shorter and, we believe, more explanatory proof. Moreover, the same proof yields the matrix Hoeffding inequality of~\citet{neeman2024concentration} and, in the complex Hermitian case, improves it (Corollary~\ref{cor:complex-hermitian}). The proof of~\citet{fan2021hoeffding}, following the operator-theoretic framework established in the earlier literature, first bounds the Markov-chain moment generating function (MGF) by the norm of a certain operator and subsequently analyzes this norm through an asymptotic cumulant generating function (CGF), finite-valued approximation, essential-spectrum arguments, convex comparison with a two-state chain, and an explicit two-dimensional eigenvalue calculation followed by a convexity reduction to the classical Hoeffding lemma. Our observation is that the \emph{convexity argument} that appears near the end of that proof can instead be applied \emph{immediately} after the MGF has been reduced to the operator norm bound; a detailed comparison is given in Section~\ref{sec:comparison}.

\medskip
The reason this rearrangement is possible is structural, and it is worth stating separately. Lemma~\ref{lem:op-bound} reduces the Markov-chain MGF to a product of one-step operator norms, and all the information about \( f_t \) is carried by these norms alone. Bounding one such norm is, by Proposition~\ref{prop:secular}, exactly a one-dimensional problem: the norm is determined by a scalar equation, so any scalar exponential-moment estimate for its root transfers to the Markov setting through the same two steps. Hoeffding's classical lemma gives Theorem~\ref{thm:scalar-hoeffding}.

\medskip
Finally, we show that this proof architecture extends easily to the noncommutative setting of random matrices considered by~\citet{neeman2024concentration}. Once the multi-matrix Golden--Thompson inequality has reduced the matrix MGF to an operator problem on a lifted Hilbert space, the same argument applies: the projection onto constants becomes finite-rank rather than rank-one, and a Schur-complement argument leads to the operator-valued analogue for a self-adjoint matrix-valued observable \(T\). Since the spectrum of \(T\) lies in a scalar interval, the same endpoint inequality lifts directly by functional calculus. No operator convexity is needed. Thus, once the noncommutative Golden--Thompson reduction has been performed, the remainder of the matrix Hoeffding argument follows the scalar proof almost verbatim. Because the argument runs on the complex Hilbert space throughout, it treats complex Hermitian observables directly, without the realification step of~\citet{neeman2024concentration} and the factor of \( 2 \) it incurs; see Corollary~\ref{cor:complex-hermitian}.

\section{Scalar Hoeffding inequality}
\label{sec:scalar}

\paragraph{Notation.}
For an integrable function \(h:\cX\to\bbC\), write \( \pi(h):=\int_{\cX}h(x)\,\pi(dx) \). Throughout, \(L^2(\pi)\) denotes the complex Hilbert space
\(L^2(\cX,\pi;\bbC)\), equipped with \( \langle g,h\rangle :=\int_{\cX}\overline{g(x)}h(x)\,\pi(dx) \) and \( \norm{h}_\pi^2 :=\sqrt{\la h, h \ra} \). Let \( L_0^2(\pi) := \{h\in L^2(\pi):\pi(h)=0\} \) denote its closed subspace of mean-zero functions. For a bounded operator \(A\) on \(L^2(\pi)\), let \(\norm{A}\) denote its operator norm. For a bounded \( f \), let \( M_f: L^2(\pi) \to L^2(\pi) \) denote the multiplication operator, i.e., for \( h \in L^2(\pi) \), \( (M_f h)(x) = f(x)h(x) \). For \( u, v \in L^2(\pi) \), the operator \( u \otimes v\) acts as \( (u\otimes v) h = \la v, h \ra u \). For a bounded operator \( A \), \( \spec(A) \) denotes its spectrum, i.e.\ the set of \( z \in \bbC \) such that \( A - z I \) has no bounded inverse. Let \( \mathbf 1 \) denote the constant function \( x \mapsto 1 \), while \( e^f \) stands for \( x \mapsto e^{f(x)} \). The notation \( \bbP_\pi \) and \( \bbE_\pi \)  is used to highlight stationarity, i.e., \( X_1 \sim \pi \). All pointwise bounds on the observables below are understood \( \pi \)-a.e.; this suffices because \( \norm{M_g} \) is the \( \pi \)-essential supremum of \( \abs{g} \) and \( X_t \sim \pi \) for every \( t \) under \( \bbP_\pi \).

\paragraph{Setup.} We view the Markov transition kernel \(P\) as the bounded operator on \(L^2(\pi)\) defined by
\begin{equation}
(Ph)(x):=\int_{\cX}h(x')\,P(x,dx'),
\qquad h\in L^2(\pi).
\end{equation}
This operator is the complexification of its restriction to \(L^2(\cX,\pi;\bbR)\), and its operator norm is unchanged under complexification. Let \( \Pi: L^2(\pi) \to L^2(\pi) \) be the orthogonal projector onto the subspace of constant functions, i.e., \( \Pi h = \pi(h) \mathbf 1 \). Note that \( \Pi = \mathbf 1 \otimes \mathbf 1 \) and \( P \Pi = \Pi P = \Pi \), by \( P \mathbf 1 = \mathbf 1 \) and \( \pi P = \pi \) respectively. Assuming \( \norm{P - \Pi} =: \lambda < 1 \), we refer to \( 1 - \lambda \) as the \emph{absolute spectral gap}. Define \( Q_\lambda: L^2(\pi) \to L^2(\pi) \) by
\begin{equation}
Q_\lambda
:= \lambda I + (1-\lambda) \Pi
= \Pi + \lambda (I - \Pi),
\end{equation}
known as the Le{\'o}n--Perron operator~\citep{leon2004optimal}. Since \( (Q_\lambda h)(x) = \lambda h(x) + (1-\lambda) \pi(h) \), the operator \( Q_\lambda \) is itself the Markov operator of a chain: at each step such a chain stays at its current state with probability \( \lambda \) and draws a fresh sample from \( \pi \) with probability \( 1 - \lambda \). That chain is reversible with respect to \( \pi \) and satisfies \( \norm{Q_\lambda - \Pi} = \lambda \), so it is the \emph{laziest} chain with the prescribed spectral parameter.
Note that \( Q_\lambda \) is self-adjoint and \( Q_\lambda \succeq \lambda I \succeq 0 \), since \( Q_\lambda - \lambda I = (1-\lambda) \Pi \); hence \( Q_\lambda^{1/2} \) is well defined, and \( Q_\lambda^{1/2} = \Pi + \sqrt{\lambda}\,(I - \Pi) \) because \( \Pi \) and \( I - \Pi \) are orthogonal projections.

\medskip
We can now state the main result. Its proof, given after a few remarks, relies on two lemmas, which we prove in the remainder of this section.

\begin{theorem}
\label{thm:scalar-hoeffding}
Let \( (X_t)_{t \ge 1} \) be a stationary Markov chain taking values in a measurable space \( \cX \) with invariant measure \( \pi \) and absolute spectral gap \( 1 - \lambda > 0 \). Then, for every sequence of bounded measurable functions \(f_t: \cX \to [a_t,b_t]\) satisfying \(b_t>a_t\), and every \(\varepsilon>0\), the following bound holds:
\[
\bbP_\pi\left\{
\sum_{t=1}^n f_t(X_t)- \sum_{t=1}^n \pi(f_t) \geq \varepsilon
\right\}
\leq \exp\left( -\frac{1-\lambda}{1+\lambda} \cdot \frac{\varepsilon^2}{2\sum_{t=1}^n(b_t-a_t)^2/4} \right).
\]
\end{theorem}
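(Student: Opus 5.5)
We follow the plan announced in the introduction: Chernoff bound, operator factorization, then a scalar one-step estimate. By homogeneity and replacing \(f_t\) with \(f_t-\pi(f_t)\), we may assume \(\pi(f_t)=0\) and \(f_t\in[a_t,b_t]\) with \(a_t\le 0\le b_t\). For \(\theta>0\), Markov's inequality gives \(\bbP_\pi\{S\ge\varepsilon\}\le e^{-\theta\varepsilon}\,\bbE_\pi e^{\theta\sum_t f_t(X_t)}\). The Markov property rewrites the MGF as
\[
\la \mathbf 1,\, M_{e^{\theta f_1}} P M_{e^{\theta f_2}} P\cdots P M_{e^{\theta f_n}}\mathbf 1\ra .
\]

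\textbf{Step 1 (factorization).} We write \(P=Q_\lambda^{1/2}KQ_\lambda^{1/2}\) with \(\norm K\le 1\). On \(\Ran\Pi\), \(K\) acts as the identity. On \(L^2_0(\pi)\), we set \(K=\lambda^{-1}(P-\Pi)\), which is legitimate because \(P\) commutes with \(\Pi\) and \(\norm{P-\Pi}=\lambda\). If \(\lambda=0\), then \(P=\Pi=Q_0\) and the factorization is trivial. Inserting the factorization and splitting \(M_{e^{\theta f_t}}=M_{e^{\theta f_t/2}}M_{e^{\theta f_t/2}}\), we regroup the product. Using \(Q_\lambda^{1/2}\mathbf 1=\mathbf 1\) and \(\norm K\le1\), we get
\[
\bbE_\pi e^{\theta S}\le\prod_{t=1}^n \norm{A_t},\qquad A_t:=M_{e^{\theta f_t/2}}Q_\lambda M_{e^{\theta f_t/2}} .
\]
The two boundary factors need care, since they are vectors rather than operators. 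We handle them by \(\norm{Q_\lambda^{1/2}M_{e^{\theta f_1/2}}}^2=\norm{A_1}\) and similarly at the end. This is the content of the paper's Lemma~\ref{lem:op-bound}.

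\textbf{Step 2 (one-step norm).} We have
\[
A_t=\lambda M_{e^{\theta f_t}}+(1-\lambda)\,u\otimes u,\qquad u=e^{\theta f_t/2}.
\]
This is a positive operator equal to a multiplication operator plus a rank-one term. For \(\mu>\lambda\norm{e^{\theta f_t}}_\infty\), the operator \(\mu-A_t\) is invertible iff \(1\neq(1-\lambda)\la u,(\mu-\lambda M_{e^{\theta f}})^{-1}u\ra\) (matrix determinant lemma / Proposition~\ref{prop:secular}). Hence \(\norm{A_t}\le\mu\) as soon as the secular condition
\[
(1-\lambda)\,\pi\!\left(\frac{e^{\theta f}}{\mu-\lambda e^{\theta f}}\right)\le 1
\]
holds, because the left side decreases in \(\mu\).

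\textbf{Step 3 (scalar estimate, main obstacle).} We must exhibit \(\mu=\exp\bigl(\tfrac{1+\lambda}{1-\lambda}\tfrac{\theta^2(b-a)^2}{8}\bigr)\) satisfying the secular condition. The function \(x\mapsto e^{x}/(\mu-\lambda e^{x})\) is convex. So by the chord bound on \([\theta a,\theta b]\) and \(\pi(f)=0\), it suffices to check the condition for the two-point law on \(\{a,b\}\) with mean zero. This reduces to an explicit \(2\times2\) eigenvalue inequality in \(p=-a/(b-a)\) and \(h=\theta(b-a)\): the largest root of the resulting quadratic should be at most \(\exp(\tfrac{1+\lambda}{1-\lambda}h^2/8)\).

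I expect this verification to be the hardest step. My first attempt is to write the condition as
\[
(1-\lambda)\,\bbE\!\left[\frac{e^{\theta f}}{\mu-\lambda e^{\theta f}}\right]\le 1 .
\]
Then I would compare with Hoeffding's lemma in the form \(\log\bbE e^{sY}\le s^2(b-a)^2/8\), applied at the rescaled parameter \(s=\theta\sqrt{(1+\lambda)/(1-\lambda)}\). If that fails, I would expand the two-point quadratic directly and use \(\cosh\)-type inequalities.

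\textbf{Step 4 (optimization).} Combining the steps gives
\[
\bbP_\pi\{S\ge\varepsilon\}\le\exp\Bigl(-\theta\varepsilon+\tfrac{1+\lambda}{1-\lambda}\tfrac{\theta^2}{8}\textstyle\sum_t(b_t-a_t)^2\Bigr).
\]
Optimizing over \(\theta\) yields the stated bound.
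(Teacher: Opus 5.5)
Steps 1, 2 and 4 are fine. Step 1 is the paper's Lemma~\ref{lem:op-bound}, and your determinant-lemma argument in Step 2 is a valid substitute for the paper's Cauchy--Schwarz resolvent criterion. The gap is Step 3. It is the only place where the factor \(\frac{1+\lambda}{1-\lambda}\) is actually earned, you do not prove it, and the route you sketch first does not work.

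The natural use of Hoeffding's lemma at \(s=\theta\sqrt{(1+\lambda)/(1-\lambda)}\) is the chain \(r_\ast\le\bbE e^{sY}\le R_\theta\). Here \(r_\ast\) is the largest root of your two-point quadratic and \(Y\) is the mean-zero law on \(\{a,b\}\). The first inequality is false in general. Both sides equal \(1+\frac{1+\lambda}{1-\lambda}\frac{\theta^2\sigma^2}{2}+O(\theta^3)\) with \(\sigma^2=-ab\). When \(\lambda>0\) and \(a+b>0\), the cubic coefficient of \(r_\ast\) is strictly larger. For instance, take \(\lambda=1/3\), \(Y\in\{-1,3\}\) with weights \(3/4,1/4\), and \(\theta=0.05\). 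Then \(r_\ast\approx1.00822>1.00788\approx\bbE e^{sY}\), even though \(r_\ast<R_\theta=e^{0.01}\). Your fallback, expanding the quadratic and using cosh-type inequalities, is essentially the explicit \(2\times2\) analysis of \citet{fan2021hoeffding}, and you leave it undone.

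What is missing is the paper's reparametrization. Set \(A=e^{\theta a}\), \(B=e^{\theta b}\), endpoint weight \(q=-a/(b-a)\), and \(C=(1-q)A+qB\). Provided \(r>\lambda\max\{A,B\}\), clearing denominators shows that the two-point condition is equivalent to
\[
r+\frac{\lambda AB}{r}\ \ge\ \lambda(A+B)+(1-\lambda)C=(1+\lambda)\bigl[(1-\tilde p)A+\tilde pB\bigr],\qquad \tilde p:=\frac{\lambda+(1-\lambda)q}{1+\lambda}.
\]
Divide both sides by \(1+\lambda\).

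\begin{itemize}
\item Weighted AM--GM bounds the left side from below by \(r^{1/(1+\lambda)}(AB/r)^{\lambda/(1+\lambda)}\). At \(r=R_\theta\) this equals \(\exp[\theta((1-\tilde p)a+\tilde pb)+\theta^2(b-a)^2/8]\).
\item Hoeffding's lemma for a Bernoulli\((\tilde p)\) variable on \(\{a,b\}\), at the same \(\theta\), bounds the right side from above by exactly this quantity.
\end{itemize}

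So the dependence penalty enters through the AM--GM exponent \(\frac{1-\lambda}{1+\lambda}\) on \(\log r\), not through a rescaling of \(\theta\).

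You also need to check that \(R_\theta>\lambda e^{\theta b}\); this is the paper's Proposition~\ref{prop:cs-weights-positive}. Without it, your invertibility criterion does not apply, and neither does the convexity and chord step on \([\theta a,\theta b]\). Nor can the two-point condition be identified with \(r\ge r_\ast\).
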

The corresponding lower-tail bound follows by applying the theorem to \(-f_t\), whose range is \([-b_t,-a_t]\) and hence has the same width \(b_t-a_t\); a two-sided bound follows by a union bound.

\begin{remark}
\label{rem:monotone-in-lambda}
Theorem~\ref{thm:scalar-hoeffding} remains valid with \( \lambda \) replaced by any \( \lambda' \in [\norm{P-\Pi}, 1) \). Indeed, the factorization \( P = Q_{\lambda'}^{1/2} K' Q_{\lambda'}^{1/2} \) with \( K' := \Pi + \lambda'^{-1}(P-\Pi) \) holds by construction, and the only further property used in Lemma~\ref{lem:op-bound} is \( \norm{K'} \le 1 \), which holds for every such \( \lambda' \). This is what one needs in practice, where an upper bound on \( \norm{P - \Pi} \) is available rather than its exact value.
\end{remark}

\begin{remark}
\label{rem:nonstationary}
For an initial law \( \mu \ll \pi \), the path law under \( \bbP_\mu \) has density \( \tfrac{d\mu}{d\pi}(X_1) \) with respect to the path law under \( \bbP_\pi \), so \( \bbP_\mu(A) \leq \bnorm{\tfrac{d\mu}{d\pi}}_\pi \bbP_\pi(A)^{1/2} \) by Cauchy--Schwarz, and Theorem~\ref{thm:scalar-hoeffding} holds under \( \bbP_\mu \) with the prefactor \( \norm{d\mu/d\pi}_\pi \) and half the exponent; if \( d\mu/d\pi \in L^\infty(\pi) \), H\"older gives the prefactor \( \norm{d\mu/d\pi}_{L^\infty(\pi)} \) and leaves the exponent unchanged. We do not pursue this further; see~\citet{fan2021hoeffding} and~\citet{paulin2015concentration}.
\end{remark}

\begin{remark}
The special case of independent random variables, for which \( P(x, dx') = \pi(dx') \) and hence \( \lambda = 0 \), recovers the classical Hoeffding inequality.
\end{remark}

\medskip
Theorem~\ref{thm:scalar-hoeffding} itself is not new: it is precisely Theorem~1 of~\citet{fan2021hoeffding}, stated above as~\eqref{eq:intro-fan-bound}, and it contains the time-independent result of~\citet{miasojedow2014hoeffding}. What is new in the present note is \emph{the proof}: it is shorter and more direct than the existing arguments. We provide detailed comparison with the existing arguments later in Section~\ref{sec:comparison}.

\medskip
We call the bound \emph{sharp} in the sense of~\citet{leon2004optimal}, and it is worth being precise about what that means. For a finite reversible chain with second largest eigenvalue \( \lambda \) and \( f \) valued in \( [0,1] \) with \( \pi(f) = \mu \), their Theorem~1 bounds \( \bbP_\pi[S_n \ge n(\mu+\varepsilon)] \) by \( \exp\{-n I(\mu+\varepsilon)\} \), where \( I \) is the exact large-deviation rate function of the two-state chain with transition matrix \( \lambda I + (1-\lambda) \mathbf 1 \mu^{\top} \); the Gaussian form then follows from \( I(x) \ge 2 \tfrac{1-\lambda_0}{1+\lambda_0}(x-\mu)^2 \) with \( \lambda_0 := \max(0,\lambda) \). The quadratic coefficient is optimal in the local sense. Indeed, for the two-state chain \(Q_\lambda\) with \(\mu=1/2\), \( I(1/2+\varepsilon)
= 2\frac{1-\lambda}{1+\lambda}\varepsilon^2 +o(\varepsilon^2) \) for \( \varepsilon \to 0 \).  Thus the factor \( (1-\lambda)/(1+\lambda) \) cannot be replaced by any larger one. The proof below establishes the inequality; sharpness is inherited from that example and is not reproved.

\begin{proof}
By the standard Chernoff argument, it is enough to provide uniform in \( \theta \in \bbR \) bound for MGF
\begin{equation}
\bbE_\pi \left[\exp\left(\theta \sum_{t=1}^n f_t(X_t) - \theta \sum_{t=1}^n \pi(f_t)\right)\right]
\leq \exp\left(\frac{1+\lambda}{1-\lambda} \cdot \frac{\theta^2}{2} \cdot \sum_{t=1}^n \frac{(b_t-a_t)^2}{4} \right).
\label{eq:mgf-target}
\end{equation}
We prove the bound in two steps. First, Lemma~\ref{lem:op-bound} demonstrates that
\begin{equation}
\label{eq:mgf-bound}
\bbE_\pi\exp\!\left(\theta\sum_{t=1}^{n}f_t(X_t)\right)
\leq \prod_{t=1}^n \bnorm{M_{e^{\theta f_t}}^{1/2} Q_\lambda M_{e^{\theta f_t}}^{1/2}}.
\end{equation}
Second, Lemma~\ref{lem:rank1-resolvent} shows that, for every measurable function \( f:\cX\to[a,b] \) and every \( \theta\in\bbR \), we have
\begin{equation}
\label{eq:op-norm-bound}
\bnorm{M_{e^{\theta f}}^{1/2} Q_\lambda M_{e^{\theta f}}^{1/2}}
\leq \exp\left( \theta \pi(f) + \frac{1 + \lambda}{1 - \lambda} \frac{\theta^2 (b -a)^2}{8} \right).
\end{equation}
Multiplying~\eqref{eq:mgf-bound} by \( \exp\left(-\theta \sum_{t} \pi(f_t)\right) \) and inserting~\eqref{eq:op-norm-bound} cancels the terms \( \theta \pi(f_t) \) and gives~\eqref{eq:mgf-target}; the Cram{\'e}r--Chernoff method~\citep{boucheron2013concentration} then recovers the claim.
\end{proof}

It remains to prove the lemmas used above.

\begin{lemma}[Le{\'o}n--Perron operator domination]
\label{lem:op-bound}
Under the setup of Theorem~\ref{thm:scalar-hoeffding}, for every \( \theta \in \bbR \),
\[
\bbE_\pi\exp\!\left(\theta\sum_{t=1}^{n}f_t(X_t)\right)
\leq \prod_{t=1}^n \bnorm{M_{e^{\theta f_t}}^{1/2} Q_\lambda M_{e^{\theta f_t}}^{1/2}}.
\]
\end{lemma}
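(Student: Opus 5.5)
We plan to write the MGF as an inner product in \(L^2(\pi)\) and then split each transition step using the factorization of \(P\) through the Le\'on--Perron operator. Write \(D_t := M_{e^{\theta f_t}}\). Each \(D_t\) is a bounded, positive, self-adjoint multiplication operator, so \(D_t^{1/2} = M_{e^{\theta f_t/2}}\) is well defined. By the Markov property and stationarity,
\[
\bbE_\pi\exp\Big(\theta\sum_{t=1}^n f_t(X_t)\Big) = \la \mathbf 1,\, D_1 P D_2 P \cdots P D_n \mathbf 1\ra .
\]
This follows by conditioning successively on \(X_{n-1},X_{n-2},\dots\): the innermost expectation produces \(P(D_n\mathbf 1)\), and so on outward. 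Integration against \(X_1\sim\pi\) is the inner product with \(\mathbf 1\).

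Next we introduce \(K := \Pi + \lambda^{-1}(P-\Pi)\) when \(\lambda>0\). For \(\lambda=0\) we have \(P=\Pi=Q_0\) and take \(K=I\); alternatively, Remark~\ref{rem:monotone-in-lambda} lets us pass to the limit \(\lambda'\downarrow 0\). We verify the factorization \(P = Q_\lambda^{1/2} K Q_\lambda^{1/2}\) using \(Q_\lambda^{1/2} = \Pi + \sqrt\lambda(I-\Pi)\). Since \(P\Pi=\Pi P=\Pi\), the operator \(P-\Pi\) maps into \(L_0^2(\pi)\) and kills constants, so \(P - \Pi = (I-\Pi)(P-\Pi)(I-\Pi)\). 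Expanding the product then gives \(\Pi + \sqrt\lambda\cdot\lambda^{-1}(P-\Pi)\sqrt\lambda = P\).

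We also need \(\norm{K}\le 1\). The operator \(K\) is block diagonal with respect to the orthogonal decomposition \(\mathrm{span}\{\mathbf 1\}\oplus L_0^2(\pi)\), acting as the identity on the first summand and as \(\lambda^{-1}(P-\Pi)\) on the second. Its norm is therefore \(\max(1,\lambda^{-1}\norm{P-\Pi}) = 1\). We work on the complex space, where the norm of \(P-\Pi\) is unchanged under complexification, as noted in the setup.

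Substituting the factorization into the inner product and regrouping, we use \(\mathbf 1 = Q_\lambda^{1/2}\mathbf 1\) (since \(\Pi\mathbf 1=\mathbf 1\)) at both ends. This gives
\[
\la \mathbf 1, (Q_\lambda^{1/2}D_1Q_\lambda^{1/2}) K (Q_\lambda^{1/2}D_2Q_\lambda^{1/2}) K\cdots K (Q_\lambda^{1/2}D_nQ_\lambda^{1/2})\mathbf 1\ra .
\]
The inner product is real and positive, so by Cauchy--Schwarz it is bounded by \(\norm{\mathbf 1}_\pi^2=1\) times the operator norm of the product. Submultiplicativity and \(\norm{K}\le1\) then bound it by \(\prod_t \norm{Q_\lambda^{1/2}D_tQ_\lambda^{1/2}}\). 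Finally, for any bounded operator \(A\) we have \(\norm{A^*A}=\norm{AA^*}\). Applying this with \(A = D_t^{1/2}Q_\lambda^{1/2}\) gives \(\norm{Q_\lambda^{1/2}D_tQ_\lambda^{1/2}} = \norm{D_t^{1/2}Q_\lambda D_t^{1/2}}\), which is the claimed bound.

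The main obstacle is the factorization step. We must check carefully that the cross terms between \(\Pi\) and \(\sqrt\lambda(I-\Pi)\) vanish, which uses both \(P\Pi=\Pi\) and \(\Pi P=\Pi\). We must also handle the degenerate case \(\lambda=0\) cleanly. The remaining steps are routine manipulations of norms.
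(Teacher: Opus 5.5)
Your proposal is correct and follows the paper's proof essentially step for step. The shared steps are the Feynman--Kac representation $\la \mathbf 1, M_1PM_2P\cdots PM_n\mathbf 1\ra$ and the factorization $P=Q_\lambda^{1/2}KQ_\lambda^{1/2}$ with the same $K$ (including $K=I$ when $\lambda=0$), together with $\norm{K}\le 1$ from the block structure on $\mathrm{span}\{\mathbf 1\}\oplus L_0^2(\pi)$. Both arguments then absorb $Q_\lambda^{1/2}$ at the endpoints via $Q_\lambda^{1/2}\mathbf 1=\mathbf 1$ and finish with $\norm{A^\ast A}=\norm{AA^\ast}$; your explicit cross-term check (using both $P\Pi=\Pi$ and $\Pi P=\Pi$) simply fills in what the paper calls holding ``by construction''.
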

\begin{proof}
For brevity, set \( M_t := M_{e^{\theta f_t}} \). Define the operator \( K: L^2(\pi) \to L^2(\pi) \) by
\begin{equation}
\label{eq:scalar-k-def}
K := \begin{cases}
\Pi + \lambda^{-1}(P-\Pi), & \lambda>0,\\
I, & \lambda=0.
\end{cases}
\end{equation}
Note that \( K \) is nonexpansive. Indeed, for \( \lambda > 0 \), the two terms act on the orthogonal subspaces \(\Ran(\Pi)\) and \(\Ran(I-\Pi)\), respectively, and hence
\begin{equation}
\norm{ K }
\leq \max \big\{ \norm{\Pi}, \lambda^{-1} \norm{ P - \Pi } \big\}
= 1.
\end{equation}
Moreover, \( K \mathbf 1 = \mathbf 1 \); therefore \( \norm{K} = 1 \). Finally, in both cases, \( P=Q_\lambda^{1/2}KQ_\lambda^{1/2} \). For \(\lambda>0\) this follows by construction, while for \(\lambda=0\), since \(P=\Pi\) and \(Q_0=\Pi\), we have \( Q_0^{1/2} K Q_0^{1/2} = \Pi =P \).

\medskip \noindent
Setting \(g_{n+1} = \mathbf 1\) and \(g_t:=M_tPg_{t+1}\), the Markov property gives, by backward induction,
\begin{equation}
g_t(x)
= \bbE\!\left[ \exp\!\left(\theta\sum_{s=t}^{n}f_s(X_s)\right) \middle| X_t=x \right].
\end{equation}
Since \(X_1 \sim \pi\), integrating \(g_1\) yields
\begin{equation}
\bbE_\pi\exp\!\left(\theta\sum_{t=1}^{n}f_t(X_t)\right)
=\la \mathbf 1,g_1 \ra
=\la \mathbf 1,M_1PM_2P \cdots PM_n\mathbf 1\ra.
\label{eq:feynman-kac}
\end{equation}
Define \( C_t := Q_\lambda^{1/2} M_t Q_\lambda^{1/2} \). Using \( Q_\lambda^{1/2} \mathbf 1 = \mathbf 1 \),
\begin{equation}
\label{eq:lemma6-inequality}
\la \mathbf 1, M_1 P M_2 P \cdots P M_n \mathbf 1 \ra
= \la \mathbf 1, C_1 K C_2 K \cdots K C_n \mathbf 1 \ra
\leq \prod_{t=1}^n \norm{C_t}, 
\end{equation}
Using \( \norm{A^\ast A}=\norm{AA^\ast} \),
\begin{equation}
\norm{C_t}
= \bnorm{ (M_t^{1/2} Q_\lambda^{1/2})^\ast M_t^{1/2} Q_\lambda^{1/2} }
= \bnorm{ M_t^{1/2} Q_\lambda^{1/2} (M_t^{1/2} Q_\lambda^{1/2})^\ast }
= \bnorm{M_t^{1/2} Q_\lambda M_t^{1/2}}
\end{equation}
Substituting this identity into~\eqref{eq:lemma6-inequality} proves the claim.
\end{proof}

The representation~\eqref{eq:feynman-kac} is the standard discrete-time Feynman--Kac formula with time-dependent potentials; see, e.g.,~\citet{fitzsimmons1999kac}. The conceptual point is the factorization \( P = Q_\lambda^{1/2} K Q_\lambda^{1/2} \) with \( \norm{K} = 1 \), which may be inserted throughout the Feynman--Kac product and makes the claim immediate.

\begin{lemma}[Rank-one resolvent bound]
\label{lem:rank1-resolvent}
Under the setup of Theorem~\ref{thm:scalar-hoeffding}, let \(a < b\), let \(f: \cX \to [a,b] \) be measurable, and let
\(\lambda \in [0,1)\). Then, for every \( \theta \in \bbR \),
\begin{equation}
\label{eq:r-theta-def}
\bnorm{M_{e^{\theta f}}^{1/2} Q_\lambda M_{e^{\theta f}}^{1/2}}
\leq \exp\left( \theta \pi(f) + \frac{1 + \lambda}{1 - \lambda} \frac{\theta^2 (b -a)^2}{8} \right)
=: R_\theta.
\end{equation}
\end{lemma}
\begin{proof}
Set \( u := e^{\theta f/2} \). Then
\begin{equation}
\label{eq:ort-decomposition-mqm}
M_{e^{\theta f}}^{1/2} Q_\lambda M_{e^{\theta f}}^{1/2}
= \lambda M_{e^{\theta f}} + (1-\lambda) \, M_{e^{\theta f}}^{1/2} ( \mathbf 1 \otimes \mathbf 1 ) M_{e^{\theta f}}^{1/2}
= \lambda M_{e^{\theta f}} + (1-\lambda) \, u \otimes u.
\end{equation}
Therefore, for any \( h \in L^2(\pi) \),
\begin{align}
\la h, M_{e^{\theta f}}^{1/2} Q_\lambda M_{e^{\theta f}}^{1/2} h \ra
&= \lambda \la h, M_{e^{\theta f}} h \ra + (1-\lambda) \la h, (u \otimes u) h \ra \\
&= \lambda \la h, M_{e^{\theta f}} h \ra + (1-\lambda) \abs{\la u, h \ra}^2.
\end{align}
Fix any \(r>0\) such that
\begin{equation}
\label{eq:r-admissible}
r - \lambda e^{\theta x}
> 0,
\qquad \forall x \in [a,b].
\end{equation}
Let \( w_\theta := r - \lambda e^{\theta f} \). Since \( f(x) \in [a,b] \), for every \( \theta \in \bbR \), \( w_\theta > 0 \). Therefore, Cauchy-Schwarz gives
\begin{equation}
\abs{\la u, h \ra}^2
\leq \LRnorm{\frac{u}{\sqrt{w_\theta}}}_\pi^2 \bnorm{\sqrt{w_\theta} \, h}_\pi^2
= \pi\!\left( \frac{e^{\theta f}}{r - \lambda e^{\theta f}} \right) \la h, (r I - \lambda M_{e^{\theta f}}) h \ra.
\end{equation}
Consequently,
\begin{equation}
\label{eq:fan-to-prove}
(1 - \lambda) \,  \pi\!\left( \frac{e^{\theta f}}{r - \lambda e^{\theta f}} \right)
\leq 1
\implies M_{e^{\theta f}}^{1/2}Q_\lambda M_{e^{\theta f}}^{1/2}
\preceq rI.
\end{equation}
For \(r\) satisfying~\eqref{eq:r-admissible}, define
\begin{equation}
\label{eq:psi-r-def}
\psi_r(x)
:= \frac{e^{\theta x}} {r-\lambda e^{\theta x}},
\qquad x\in[a,b].
\end{equation}
Condition~\eqref{eq:r-admissible} ensures that \(\psi_r\) is well defined on \([a,b]\). Moreover, \(\psi_r\) is convex on \([a,b]\) since
\begin{equation}
\psi_r''(x)
= \frac{\theta^2 re^{\theta x} (r + \lambda e^{\theta x})}{(r - \lambda e^{\theta x})^3}
\ge 0.
\end{equation}
Convexity gives the pointwise bound,
\begin{equation}
\label{eq:psi-r-fx-convexity}
\psi_r(f(x))
\leq \frac{b - f(x)}{b - a} \psi_r(a) + \frac{f(x) - a}{ b -a } \psi_r(b).
\end{equation}
Integrating this inequality gives,
\begin{equation}
\label{eq:target-after-convexity}
\pi\left( \frac{e^{\theta f}}{r - \lambda e^{\theta f}} \right)
= \pi(\psi_r \circ f)
\leq (1 - \mu) \psi_r(a) + \mu \psi_r(b),
\qquad \text{where} \quad
\mu := \frac{\pi(f) - a}{b - a} \in [0,1].
\end{equation}
Now, it is enough to upper-bound the right-hand side of~\eqref{eq:target-after-convexity}. Denoting \( A := e^{\theta a} \) and \( B := e^{\theta b} \), it is enough to prove
\begin{equation}
\label{eq:target-after-convexity-simple}
(1 - \lambda) \left[ (1-\mu) \frac{A}{r - \lambda A} + \mu \frac{B}{r - \lambda B}\right]
\leq 1.
\end{equation}
By~\eqref{eq:r-admissible} we have \( r>\lambda\max\{A,B\} \). Put \( C := (1-\mu) A + \mu B \). Then,~\eqref{eq:target-after-convexity-simple} is equivalent to
\begin{equation}
(1 - \lambda) (r C - \lambda AB)
\leq (r - \lambda A) (r - \lambda B).
\end{equation}
Expanding brackets and collecting terms we get quadratic
\begin{equation}
r^2 - r\left[\lambda(A+B) + (1-\lambda) C\right]+ \lambda AB
\ge 0.
\end{equation}
Because \( r > 0 \),
\begin{equation}
\label{eq:r-quadratic-bound}
r + \frac{\lambda A B}{r}
\ge \lambda (A + B) + (1- \lambda) C
= (1+\lambda) \left[(1-p) A + p B\right],
\end{equation}
where
\begin{equation}
\label{eq:prob-p}
p
:= \frac{\lambda + (1-\lambda)\mu}{1 + \lambda} 
\in [0,1].
\end{equation}
Dividing by \( (1+\lambda) \),
\begin{equation}
\label{eq:r-to-bernoulli}
\frac{r + \lambda AB/r}{1 + \lambda}
\ge (1 - p) A + p B
\implies
(1 - \lambda) \pi\left( \frac{e^{\theta f}}{r - \lambda e^{\theta f}} \right)
\leq 1.
\end{equation}
The weighted AM--GM inequality gives
\begin{equation}
\label{eq:r-r-theta-pre}
\frac{r + \lambda AB/r}{1 + \lambda}
\ge r^{1/(1+\lambda)} \left( \frac{AB}{r} \right)^{\lambda/(1+\lambda)}
= \exp\left[ \frac{1-\lambda}{1+\lambda} \log r + \frac{\lambda}{1 + \lambda} \theta(a+b) \right].
\end{equation}
We now set \( r = R_\theta \) defined in~\eqref{eq:r-theta-def}. By Proposition~\ref{prop:cs-weights-positive}, the choice satisfies~\eqref{eq:r-admissible}. Substituting in~\eqref{eq:r-r-theta-pre} yields
\begin{equation}
\frac{R_\theta + \lambda AB/R_\theta}{1 + \lambda}
\ge \exp\Big[ \theta \underbrace{\frac{(1-\lambda) \pi(f) + \lambda(a+b)}{1 + \lambda }}_{(1-p) a + pb} + \frac{\theta^2 (b-a)^2}{8} \Big].
\end{equation}
On the other hand, the classical Hoeffding lemma for Bernoulli random variable \( X \) with \( \bbP(X = a) = (1-p) \) and \( \bbP(X =b) = p \) gives
\begin{equation}
\exp\Big[ \theta[(1-p) a + pb] + \frac{\theta^2 (b-a)^2}{8}\Big]
\ge (1 - p) e^{\theta a} + p e^{\theta b}
= (1 - p) A + p B.
\end{equation}
This proves~\eqref{eq:r-to-bernoulli} for \(r=R_\theta\), and hence~\eqref{eq:fan-to-prove}. Therefore, \(M_{e^{\theta f}}^{1/2}Q_\lambda M_{e^{\theta f}}^{1/2}\preceq R_\theta I\), or equivalently, \(\bnorm{M_{e^{\theta f}}^{1/2}Q_\lambda M_{e^{\theta f}}^{1/2}}\leq R_\theta\), as claimed.
\end{proof}

\begin{remark}[The effective Bernoulli parameter]
\label{rem:meaning-of-p}
The probability \( p \) in~\eqref{eq:prob-p} arises naturally and has a transparent interpretation. Combining the two fractions in~\eqref{eq:target-after-convexity-simple} and clearing the denominator produces a linear form \( c_a e^{\theta a} + c_b e^{\theta b} \) with \( c_a = 1 - (1-\lambda) \mu \) and \( c_b = \lambda + (1-\lambda) \mu \); recognizing it as a Bernoulli MGF requires normalizing, which gives \( p = c_b / (c_a + c_b) \), that is,~\eqref{eq:prob-p}. Equivalently,
\begin{equation}
\label{eq:p-shrinkage}
p - \frac{1}{2}
= \frac{1-\lambda}{1+\lambda} \left( \mu - \frac{1}{2} \right),
\end{equation}
so \( p \) is the endpoint weight \( \mu \) of the observable shrunk toward \( 1/2 \) by exactly the factor \( (1-\lambda)/(1+\lambda) \) that appears in the final bound. This is the whole effect of the Markov dependence at this step: the \( \lambda \)-term contributes symmetrically to the two endpoints and therefore pulls the effective Bernoulli parameter toward \( 1/2 \).
\end{remark}

\begin{proposition}
\label{prop:cs-weights-positive}
Under the assumptions of Lemma~\ref{lem:rank1-resolvent}, for every \(\theta \in \bbR\) and every \(x \in [a,b]\),
\[
R_\theta-\lambda e^{\theta x}>0,
\]
where \(R_\theta\) is defined in~\eqref{eq:r-theta-def}.
\end{proposition}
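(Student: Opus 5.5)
We need $R_\theta > \lambda e^{\theta x}$ for all $x\in[a,b]$. Since $\lambda<1$, it suffices to show $R_\theta \ge \max\{e^{\theta a}, e^{\theta b}\}$ whenever $\lambda>0$; the case $\lambda=0$ is trivial because $R_\theta>0$. By monotonicity of $x\mapsto e^{\theta x}$, the maximum over $[a,b]$ is attained at an endpoint: at $b$ if $\theta\ge 0$ and at $a$ if $\theta<0$. By the symmetry $(f,\theta,a,b)\mapsto(-f,-\theta,-b,-a)$, which leaves $R_\theta$ invariant, we may assume $\theta\ge 0$. The goal then becomes showing
\[
\theta\pi(f)+\frac{1+\lambda}{1-\lambda}\frac{\theta^2(b-a)^2}{8} > \log\lambda+\theta b .
\]

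The cheap bound $\pi(f)\ge a$ is not enough by itself. For small $\theta>0$ the left side is roughly $\theta a$, while the right side is $\theta b+\log\lambda$, and $\log\lambda<0$ only helps by a fixed constant. So we need a genuinely quadratic-versus-linear argument in $\theta$. Set $s:=\theta(b-a)\ge 0$ and $\kappa:=\frac{1+\lambda}{1-\lambda}$. Using $\pi(f)\ge a$, it suffices to prove
\[
\kappa\frac{s^2}{8}-s-\log\lambda>0 \qquad\text{for all } s\ge 0 .
\]
The left side is a quadratic in $s$ whose minimum, attained at $s=4/\kappa$, equals $-\frac{2}{\kappa}-\log\lambda$. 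The claim therefore reduces to the scalar inequality
\[
-\log\lambda>\frac{2(1-\lambda)}{1+\lambda}, \qquad \lambda\in(0,1).
\]

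This is the classical inequality $\log y>\frac{2(y-1)}{y+1}$ for $y=1/\lambda>1$. To prove it, note that $g(y):=\log y-\frac{2(y-1)}{y+1}$ satisfies $g(1)=0$ and
\[
g'(y)=\frac{1}{y}-\frac{4}{(y+1)^2}=\frac{(y-1)^2}{y(y+1)^2}>0 \quad\text{for } y>1 .
\]
Writing it in the variable $y=1/\lambda$ gives exactly $-\log\lambda>2\frac{1-\lambda}{1+\lambda}$, so the inequality is strict for $\lambda\in(0,1)$, as needed.

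The main obstacle is spotting that the naive bound fails for small $\theta$. It has to be replaced by minimizing the quadratic in $\theta$, which turns the problem into the logarithmic-mean type inequality above. Equality would occur only at $\lambda=1$, which is excluded.
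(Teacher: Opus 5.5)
Your proof is correct and follows essentially the paper's argument. You reduce to $\theta\ge 0$ (by the symmetry $(f,\theta,a,b)\mapsto(-f,-\theta,-b,-a)$ rather than by repeating the argument with $x\ge a$), use $x\le b$ and $\pi(f)\ge a$, minimize the quadratic in $\theta(b-a)$, and check $-\log\lambda>2(1-\lambda)/(1+\lambda)$ by differentiating in $y=1/\lambda$ instead of in $\lambda$.

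The only slip is your opening claim that it suffices to show $R_\theta\ge\max\{e^{\theta a},e^{\theta b}\}$. That condition is false in general, for example for small $\theta>0$ when $\pi(f)<b$. It does no harm, because you never use it: the inequality you actually prove keeps the $\log\lambda$ term.
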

\begin{proof}
The case \(\lambda=0\) is immediate, so assume \(0<\lambda<1\). Fix \(x \in [a,b] \). For \(\theta \geq 0\), since \(x \leq b\) and \(b-\pi(f)\leq b-a\),
\begin{equation}
\log R_\theta-\log\lambda-\theta x
\geq \log R_\theta-\log\lambda-\theta b
\geq \frac{1+\lambda}{1-\lambda}\frac{(b-a)^2 \theta^2}{8}-(b-a) \theta -\log\lambda.
\end{equation}
The right-hand side is a quadratic in \(\theta\), whose minimum equals
\begin{equation}
-\frac{2(1-\lambda)}{1+\lambda}-\log\lambda
=: g(\lambda).
\end{equation}
Since
\begin{equation}
g'(\lambda)
= -\frac{(1-\lambda)^2}{\lambda(1+\lambda)^2}<0,
\qquad g(1)=0,
\end{equation}
we have \(g(\lambda)>0\) for \(0<\lambda<1\). For \(\theta<0\), the same argument, using \(x \geq a\) and \(\pi(f)-a \leq b-a\), gives the same lower bound.
\end{proof}

\subsection{The one-step problem is one-dimensional}

Lemma~\ref{lem:rank1-resolvent} is an inequality, and it is natural to ask how tight it is: its proof passes through a Cauchy--Schwarz bound and a convexity bound, either of which might in principle be loose. The operator being estimated is, by~\eqref{eq:ort-decomposition-mqm}, a rank-one perturbation of a multiplication operator, and the part of the spectrum of such an operator lying above the spectrum of the unperturbed part is governed by a single scalar equation. This scalar characterization makes precise the claim from the introduction that, after Lemma~\ref{lem:op-bound}, 
the remaining problem is essentially one-dimensional. We include the following proposition, which is not needed for Theorem~\ref{thm:scalar-hoeffding} but will be useful for the comparison in Section~\ref{sec:comparison}.

\begin{proposition}[Exact form of the one-step bound]
\label{prop:secular}
Let \( \lambda \in [0,1) \), let \( f \) be bounded and measurable, and write \( m := e^{\theta f} \) and \( \kappa := \lambda \norm{m}_{L^\infty(\pi)} \). For \( R > \kappa \) set
\begin{equation}
\label{eq:secular}
\Phi(R)
:= (1-\lambda) \, \pi\!\left( \frac{m}{R - \lambda m} \right).
\end{equation}
Then \( \Phi \) is finite, continuous and strictly decreasing on \( (\kappa, \infty) \) with \( \Phi(R) \to 0 \) as \( R \to \infty \), and
\begin{equation}
\label{eq:one-step-exact}
\bnorm{M_{m}^{1/2} Q_\lambda M_{m}^{1/2}}
= \inf \big\{ R > \kappa : \Phi(R) \leq 1 \big\}.
\end{equation}
In particular, if \( \Phi(R) > 1 \) for some \( R > \kappa \), then the right-hand side of~\eqref{eq:one-step-exact} is the unique root of the equation \( \Phi(R) = 1 \); otherwise it equals \( \kappa \).
\end{proposition}

\begin{proof}
Write \( u := m^{1/2} \) and \( B := M_m^{1/2} Q_\lambda M_m^{1/2} = \lambda M_m + (1-\lambda) \, u \otimes u \), by~\eqref{eq:ort-decomposition-mqm}. Note that \( B \) is self-adjoint and \( B \succeq 0 \), so \( \norm{B} = \sup \spec(B) \). For \( R > \kappa \) we have
\begin{equation}
0
< \frac{m}{R - \lambda m}
\leq \frac{\norm{m}_{L^\infty(\pi)}}{R - \kappa}
\quad \pi\text{-a.e.,}
\end{equation}
so \( \Phi(R) \) is finite; the integrand is strictly decreasing in \( R \) pointwise, and the stated continuity and limit follow by dominated convergence. The inequality \( \leq \) in~\eqref{eq:one-step-exact} is~\eqref{eq:fan-to-prove}: for every \( R > \kappa \) with \( \Phi(R) \leq 1 \) we have \( B \preceq R I \), hence \( \norm{B} \leq R \). For the converse, first \( B \succeq \lambda M_m \) gives \( \norm{B} \geq \norm{\lambda M_m} = \kappa \). Suppose now that \( \Phi(R) > 1 \) for some \( R > \kappa \); by the above there is then a unique \( R_\ast \in (\kappa,\infty) \) with \( \Phi(R_\ast) = 1 \), and the right-hand side of~\eqref{eq:one-step-exact} equals \( R_\ast \). Put
\begin{equation}
h := (R_\ast I - \lambda M_m)^{-1} u
= \frac{u}{R_\ast - \lambda m},
\end{equation}
which is bounded, hence in \( L^2(\pi) \), and nonzero. Since \( \la u, h \ra = \pi\big( m / (R_\ast - \lambda m) \big) = (1-\lambda)^{-1} \),
\begin{equation}
Bh
= \lambda M_m h + (1-\lambda) \la u, h \ra \, u
= \lambda M_m h + u
= \lambda M_m h + (R_\ast I - \lambda M_m) h
= R_\ast h .
\end{equation}
So \( R_\ast \in \operatorname{spec}(B) \) and \( \norm{B} \geq R_\ast \), which completes the proof.
\end{proof}

Lemma~\ref{lem:rank1-resolvent} is the assertion that \( R_\theta \) lies in the set appearing in~\eqref{eq:one-step-exact}, and its proof is the scalar convexity estimate that verifies this. Any other scalar exponential-moment estimate may be substituted at this point without modifying Lemma~\ref{lem:op-bound}. Note also that the Cauchy--Schwarz step in the proof of Lemma~\ref{lem:rank1-resolvent} is saturated precisely at the eigenvector \( h \) exhibited above, so no information is lost there.

\subsection{Comparison with existing proofs}
\label{sec:comparison}

The statements of the intermediate lemmas and the proposition above are essentially known. The novelty lies partly in their proofs and, more importantly, in the way these ingredients  are combined to obtain the proof of Theorem~\ref{thm:scalar-hoeffding}. Indeed,  Lemma~\ref{lem:op-bound} appears as~\citet[Lemma~6]{fan2021hoeffding}, itself a refinement of~\citet[Lemma~3.5]{miasojedow2014hoeffding}, and the role of operators of the form \( M_{e^{\theta f/2}} Q_\lambda M_{e^{\theta f/2}} \) has been long established: see the original argument of~\citet{leon2004optimal} in the Hoeffding setting and the spectral perturbation method of~\citet{lezaud1998chernoff} for Chernoff-type bounds.

\medskip
Proposition~\ref{prop:secular} is likewise closely related to existing work \citet{miasojedow2014hoeffding} derives the same equation \( \Phi(R)=1 \) in Eq.~3.8, uses the same eigenfunction as in the proof above, and, for finitely-valued observables, identifies the norm of the one-step operator with the root of this equation in Lemma~3.10(i). The difference is in how this equation is used. \citet{miasojedow2014hoeffding} needs the one-step norm to \emph{be} an eigenvalue, and~\eqref{eq:secular} need not have a root: for observables whose essential supremum is approached on sets of small measure, the left-hand side stays below \( 1 \) throughout \( (\kappa,\infty) \). Restoring a root is what forces the approximation of \( f \) by finitely valued observables, the accompanying positivity estimate, and the passage to the limit, which is identified in the work as the main source of difficulty in the general setting. Here that difficulty does not arise. Lemma~\ref{lem:rank1-resolvent} never asks for an eigenvalue: it uses only the implication~\eqref{eq:fan-to-prove}, which is a resolvent criterion and holds whether or not a root exists, and Proposition~\ref{prop:secular} disposes of the remaining case by recording that the infimum in~\eqref{eq:one-step-exact} is then \( \kappa \). No discretization is needed at any point.

\medskip
An inequality structurally identical to~\eqref{eq:r-quadratic-bound} appears in the proof of~\citet[Lemma~4, Eq.~12]{fan2021hoeffding}, where it bounds the leading eigenvalue of the reduced two-state chain operator. The proof of~\citet{fan2021hoeffding} retains the full strength of the Le{\'o}n--Perron reduction, which is a convex-order comparison between the additive functionals of two Markov chains, and reaches the scalar convexity step only at the end: Lemma~6 is followed by an asymptotic-CGF representation (Lemma~7, and with it simple-function approximation, essential-spectrum analysis and Weyl's theorem), a probabilistic two-state comparison (Lemma~9) and an explicit \( 2 \times 2 \) operator analysis (Lemma~10). The argument given here retains only Lemma~6 and bypasses Lemmas~7, 9 and~10. This is possible because Proposition~\ref{prop:secular} shows that after the operator-product reduction the problem is already one-dimensional, so~\eqref{eq:fan-to-prove} may be used directly as a resolvent criterion rather than interpreted as an eigenvalue equation, and ordinary scalar convexity applied to the single function~\eqref{eq:psi-r-def} suffices. The convex-order comparison is a strictly stronger statement than the operator bound requires.

\medskip
We conclude by noting that a short proof of a Hoeffding-type inequality for finite-state Markov chains was also given by~\citet{rao2019hoeffding}. Their route is different from both~\citet{fan2021hoeffding} and the present one: with \( \lambda := \norm{A - E_\pi}_{L^2(\pi) \to L^2(\pi)} \) and \( \bbE[f_i(Y_i)] = 0 \), \( \abs{f_i} \le a_i \), he obtains
\begin{equation}
\label{eq:intro-rao-bound}
\bbP_\pi\left\{ \Babs{ \sum_{i=1}^n f_i(Y_i) } \ge u \Big( \sum_{i=1}^n a_i^2 \Big)^{1/2} \right\}
\leq 2 \exp\left( - \frac{u^2 (1-\lambda)}{64 e} \right),
\end{equation}
by bounding even moments of the sum by expanding them into monomials, estimating each monomial by a product of powers of \( \lambda \), and concluding by Markov's inequality. This argument also avoids the two-state comparison, and does not require reversibility. But the resulting inequality is loose: in the normalization of~\eqref{eq:intro-fan-bound} the exponent is smaller than the sharp one by a factor of \( 32e/(1+\lambda) \). \citet{rao2019hoeffding} notes this, observing that the bound of~\citet{fan2021hoeffding} is sharper while their proof is ``arguably somewhat simpler''. The point of the present note is that the argument of Section~\ref{sec:scalar} is of comparable length, while it also yields the \emph{sharp} constant for time-dependent observables on a \emph{general} state space.

\section{Markov-dependent random matrices}
\label{sec:matrices}

Next we show that the same proof architecture yields a substantial simplification of the Hoeffding inequality for Markov-dependent random matrices presented in~\citet{neeman2024concentration}. In the existing argument, the matrix MGF is first reduced (via the multi-matrix Golden--Thompson inequality) to an operator problem on a lifted Hilbert space; the proof then proceeds through an asymptotic MGF representation, comparison with a two-state chain, and an explicit \(2 \times 2\) eigenvalue calculation. We show that all of these steps can be avoided. The key observation is that noncommutativity enters only in the initial Golden--Thompson reduction. Once this step is done, the resulting problem has exactly the same structure as in the scalar case.

\paragraph{Relation to recent work.}
Very recently, \citet{song2026matrix} combined the phase-sensitive product estimate of~\citet{neeman2024concentration} with the full-line multivariate trace inequality of~\citet{sutter2017multivariate} to obtain the sharp matrix Hoeffding bound, with dimensional prefactor \(d\) and the same dependence-adjusted exponent as in the sharp scalar inequality. The contribution of the present section is complementary: the two operator lemmas below give a direct proof of this phase-sensitive product estimate on the Hilbert--Schmidt space avoiding asymptotic spectral analysis and two-state comparison; the bounded-interval multi-matrix Golden--Thompson inequality then yields the theorem below, while its full-line counterpart gives the sharper bound of \citet{song2026matrix}.

\paragraph{Notation.}
Let \( \bbH_d(\bbC) := \left\{A\in\bbC^{d\times d}:A=A^\ast \right\} \) denote the space of self-adjoint \(d\times d\) complex matrices, and let \(\lambda_{\max}(A)\) denote the largest eigenvalue of \(A\in\bbH_d(\bbC)\). We write \(S_2^d:=\bbC^{d\times d}\) for the space of \(d\times d\) complex matrices, equipped with the Hilbert--Schmidt inner product \(\la A,B\ra_{\HS}:=\Tr(A^\ast B)\) and the associated norm \(\norm{A}_{\HS}:=\sqrt{\Tr(A^\ast A)}\). In finite dimensions, \(\norm{\cdot}_{\rm HS}\) coincides with the Frobenius norm. Let \( \cL(S_2^d) \) denote the space of bounded linear operators acting on \( S_2^d \).

\paragraph{Setup.}
As in the scalar case, let \((X_t)_{t\geq1}\) be a stationary Markov chain on a general measurable state space \(\cX\), with transition kernel \(P\) and stationary distribution \(\pi\). Suppose that \(P\) admits an absolute spectral gap \(1-\lambda>0\) on \(L^2(\pi)\). Let \( (F_t:\cX\to\bbH_d(\bbC))_{t\geq1} \) be a sequence of measurable matrix-valued functions. We work on the lifted Hilbert space \( \cH_d:=L^2(\pi;S_2^d) \). Lift the Markov operator to \(\cH_d\) by
\begin{equation}
(\wt P G)(x)
:= \int_{\cX} G(x')\,P(x,dx'),
\qquad G \in \cH_d,
\end{equation}
where the integral is understood in the Bochner sense. Let \(\Pi\) denote the orthogonal projection onto the constant \(S_2^d\)-valued functions, i.e., \( (\Pi G)(x) := \int_{\cX}G(x')\,\pi(dx') \). Set \( Q_\lambda:=\Pi+\lambda(I-\Pi) \). Note that \( \norm{\wt P - \Pi} = \norm{P - \Pi} = \lambda \): under the identification \( \cH_d \cong L^2(\pi) \otimes S_2^d \) one has \( \wt P - \Pi = (P - \Pi) \otimes \mathrm{id}_{S_2^d} \), where \( \Pi \) on the right denotes the projection of Section~\ref{sec:scalar} and \( \mathrm{id}_{S_2^d} \in \cL(S_2^d) \) is the identity, and \( \norm{A \otimes \mathrm{id}_{S_2^d}} = \norm{A} \) for every bounded \( A \). Let \( J:S_2^d \to \cH_d \) be the map acting as \( (JZ)(x):=Z \), so that \( \Pi=JJ^\ast \). All matrix-valued expectations below are understood as Bochner integrals.

\medskip \noindent
For a bounded measurable operator-valued map \( A:\cX\to\cL(S_2^d) \), we denote by \(M_A\) the corresponding multiplication operator on
\(\cH_d\), defined by
\begin{equation}
(M_A G)(x):=A(x)G(x),
\qquad G\in\cH_d.
\end{equation}
In particular, for \(t\geq1\) and \(\phi\in[-\pi/2,\pi/2]\), let \(T_{t,\phi}:\cX\to\cL(S_2^d)\) be defined by
\begin{equation}
T_{t,\phi}(x) \, Z
= \frac{\cos\phi}{2} \bigl[F_t(x) Z+ZF_t(x)\bigr],
\qquad Z\in S_2^d.
\end{equation}
Since \( F_t(x) \) is self-adjoint, so is \( T_{t,\phi}(x) \) with respect to \( \la \cdot, \cdot \ra_\HS \), and if \( F_t(x) \) has eigenvalues \( \mu_1, \dots, \mu_d \) with eigenvectors \( v_1, \dots, v_d \), then \( T_{t,\phi}(x) \) has eigenvalues \( \tfrac{\cos \phi}{2}(\mu_i + \mu_j) \) with eigenvectors \( v_i v_j^\ast \), \( 1 \le i, j \le d \). In particular, \( a_t I_d \preceq F_t(x) \preceq b_t I_d \) and \( \cos\phi \ge 0 \) give \( \cos\phi \, a_t I \preceq T_{t,\phi}(x) \preceq \cos\phi \, b_t I \) on \( S_2^d \). We write \( M_{t,\phi}(\theta) := M_{\exp \left(\theta T_{t,\phi}(x)\right)} \), where \(\exp\!\left(\theta T_{t,\phi}(x)\right)\) is defined by the functional calculus of the self-adjoint operator \(T_{t,\phi}(x)\). Let \(\bfI \in \cH_d\) denote the constant function \(\bfI(x)=I_d\), so that \( \norm{\bfI}_{\cH_d}^2 = \norm{I_d}_{\rm HS}^2 = d \).

\medskip
We can now state the main result of the section.

\begin{theorem}
\label{thm:matrix-hoeffding}
Assume that \( \pi(F_t)=0 \) and
\(a_tI_d\preceq F_t(x)\preceq b_tI_d\) for every \(t\geq1\) and \(x\in\cX\), where \(b_t>a_t\). Then, for every \(\varepsilon>0\),
\[
\bbP_\pi \left\{ \lambda_{\max} \left( \sum_{t=1}^n F_t(X_t) \right) \ge \varepsilon \right\}
\leq d^{2 - \pi/4} \exp\left( - \frac{1 - \lambda}{1 + \lambda} \cdot \frac{\varepsilon^2}{8 \sum_{t=1}^n (b_t - a_t)^2 / \pi^2} \right).
\]
\end{theorem}

\begin{remark}
\label{rem:centering}
Centering can be imposed without restricting the class of observables, but it comes at a cost. Applying Theorem~\ref{thm:matrix-hoeffding} to \( F_t - \pi(F_t) \) is legitimate, since \( a_t I_d \preceq F_t(x) \preceq b_t I_d \) forces \( a_t I_d \preceq \pi(F_t) \preceq b_t I_d \) and therefore \( -(b_t-a_t) I_d \preceq F_t(x) - \pi(F_t) \preceq (b_t-a_t) I_d \); the width doubles, so the exponent loses a factor \( 4 \). No such loss occurs in Theorem~\ref{thm:scalar-hoeffding}, because Lemma~\ref{lem:rank1-resolvent} carries \( \pi(f) \) explicitly.
\end{remark}

\paragraph{Discussion.} For real symmetric matrices, Theorem~\ref{thm:matrix-hoeffding} reproduces the Hoeffding bound in~\citet[Theorem~2.5]{neeman2024concentration}. For complex Hermitian matrices, it gives the same bound directly on the complex Hilbert space, without the additional factor \(2\) introduced by their realification argument in Corollary~2.7. More importantly, the proof follows exactly the same architecture as in the scalar case; see Section~\ref{sec:scalar}.

\begin{proof}
For \(\theta>0\) and \(\phi\in[-\pi/2,\pi/2]\), define the noncommutative MGF functional~\citep{neeman2024concentration}
\begin{equation}
\label{eq:matrix-mgf}
\cM_\phi(\theta)
:= \bbE_\pi \LRnorm{\prod_{t=1}^n A_{t,\phi}(X_t)}_{\HS}^2,
\qquad \text{where} \quad
A_{t,\phi}(x)
:= \exp\!\left(\frac{\theta e^{i\phi}}{2}F_t(x)\right).
\end{equation}
First, we use the bounded multi-matrix Golden--Thompson inequality~\citep[Theorem~3.4]{garg2018matrix} together with the Schatten estimate at exponent \(p=\pi/4\). More precisely, for every \(s>0\),
\begin{equation}
\bbE_\pi\!\left[\Tr\exp\!\left( \frac{\pi s}{4}\sum_{t=1}^n F_t(X_t) \right)\right]
\le d^{\,1-\pi/4} \int_{-\pi/2}^{\pi/2} \cM_\phi(s)\,\xi_n(d\phi).
\end{equation}
Here \(\xi_n\) is a fixed probability measure defined by the Golden--Thompson theorem; it may depend on \(n\), but not on \(s\), the matrices, or the realized sample path. Consequently, expectation and integration may be interchanged. Setting \(s=4\theta/\pi\) and applying Markov's inequality gives
\begin{equation}
\label{eq:matrix-gt}
\bbP_\pi\left\{ \lambda_{\max}\left(\sum_{t=1}^nF_t(X_t)\right) \geq \varepsilon \right\}
\leq d^{1-\pi/4} \inf_{\theta>0} e^{-\theta \varepsilon} \int_{-\pi/2}^{\pi/2} \cM_\phi\left(\frac{4\theta}{\pi}\right) \,\xi_n(d\phi).
\end{equation}
Second, Lemma~\ref{lem:matrix-op-bound} reduces the noncommutative MGF~\eqref{eq:matrix-mgf} to a product of one-step operator norms:
\begin{equation}
\label{eq:matrix-op-product}
\cM_\phi(\theta)
\leq d\prod_{t=1}^n \bnorm{ M_{t,\phi}(\theta)^{1/2} Q_\lambda M_{t,\phi}(\theta)^{1/2}}.
\end{equation}
Finally, Lemma~\ref{lem:matrix-one-step} gives, uniformly in \(\phi\in[-\pi/2,\pi/2]\),
\begin{equation}
\label{eq:matrix-one-step}
\bnorm{ M_{t,\phi}(\theta)^{1/2} Q_\lambda M_{t,\phi}(\theta)^{1/2}}
\leq \exp\left(
\frac{1+\lambda}{1-\lambda}
\frac{\theta^2(b_t-a_t)^2}{8}
\right).
\end{equation}
Combining \eqref{eq:matrix-op-product} and
\eqref{eq:matrix-one-step},
\begin{equation}
\label{eq:matrix-mgf-final}
\cM_\phi(\theta)
\leq
d\exp\left(
\frac{1+\lambda}{1-\lambda}
\frac{\theta^2}{8}
\sum_{t=1}^n(b_t-a_t)^2
\right).
\end{equation}
Substituting \eqref{eq:matrix-mgf-final} into
\eqref{eq:matrix-gt} yields
\begin{align}
\bbP_\pi\left\{ \lambda_{\max}\left(\sum_{t=1}^nF_t(X_t)\right) \geq\varepsilon \right\}
&\leq d^{2-\pi/4} \inf_{\theta>0} \exp\left( -\theta\varepsilon + \frac{1+\lambda}{1-\lambda} \frac{2\theta^2}{\pi^2} \sum_{t=1}^n(b_t-a_t)^2 \right) \\
& \leq d^{2-\pi/4} \exp\left( -\frac{1-\lambda}{1+\lambda} \frac{\pi^2\varepsilon^2} {8\sum_{t=1}^n(b_t-a_t)^2} \right),
\end{align}
as claimed.
\end{proof}

\medskip
We next prove the lemmas used above.

\begin{lemma}[Operator-product bound]
\label{lem:matrix-op-bound}
Under the setup of Theorem~\ref{thm:matrix-hoeffding}, for every \(\theta>0\) and \(\phi\in[-\pi/2,\pi/2]\),
\[
\cM_\phi(\theta)
\leq d\prod_{t=1}^n \bnorm{M_{t,\phi}(\theta)^{1/2} Q_\lambda M_{t,\phi}(\theta)^{1/2} }.
\]
\end{lemma}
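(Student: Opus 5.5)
The plan is to follow the proof of Lemma~\ref{lem:op-bound}, with one extra observation. The one-step operator arising in the matrix Feynman--Kac formula is not positive. It does, however, factor as a positive multiplication operator times a commuting unitary, and the unitary factor costs nothing in the norm estimate.

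First I rewrite the Hilbert--Schmidt norm as an iterated conjugation. For \(x\in\cX\), define \(E_t(x)\in\cL(S_2^d)\) by \(E_t(x)Z := A_{t,\phi}(x)\,Z\,A_{t,\phi}(x)^\ast\). Then
\[
\bnorm{A_{t_1}\cdots A_{t_n}}_{\HS}^2 = \Tr\bigl(A_1\cdots A_nA_n^\ast\cdots A_1^\ast\bigr) = \bigl\la I_d,\, E_1(E_2(\cdots E_n(I_d)))\bigr\ra_{\HS},
\]
where \(A_s := A_{s,\phi}(X_s)\). Set \(G_{n+1}:=\bfI\) and \(G_t := M_{E_t}\wt P G_{t+1}\). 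By backward induction, using the Markov property exactly as in Lemma~\ref{lem:op-bound} (with Bochner integrals in place of scalar ones), \(G_t(x)\) is the conditional expectation of \(E_t(X_t)\cdots E_n(X_n)(I_d)\) given \(X_t=x\). Since \(X_1\sim\pi\), this gives the representation
\[
\cM_\phi(\theta) = \la \bfI,\, M_{E_1}\wt P M_{E_2}\wt P\cdots \wt P M_{E_n}\bfI\ra_{\cH_d}.
\]

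Second, I identify the structure of \(E_t(x)\). Let \(L_F Z := FZ\) and \(R_F Z := ZF\). Both are self-adjoint on \((S_2^d,\la\cdot,\cdot\ra_\HS)\) and they commute. Since \(A^\ast = \exp(\theta e^{-i\phi}F/2)\),
\[
E_t(x) = \exp\!\Bigl(\tfrac{\theta}{2}\bigl(e^{i\phi}L_{F_t(x)} + e^{-i\phi}R_{F_t(x)}\bigr)\Bigr) = \exp\bigl(\theta T_{t,\phi}(x)\bigr)\,U_t(x),
\]
where \(U_t(x) := \exp\bigl(\tfrac{i\theta\sin\phi}{2}(L_{F_t(x)}-R_{F_t(x)})\bigr)\). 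Here \(U_t(x)\) is unitary and commutes with \(\exp(\theta T_{t,\phi}(x))\), because everything is a function of the commuting pair \(L_F,R_F\). Consequently \(M_{E_t} = M_{t,\phi}(\theta)^{1/2}\,M_{U_t}\,M_{t,\phi}(\theta)^{1/2}\), with \(\norm{M_{U_t}}\le 1\).

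Third, I insert the Le\'on--Perron factorization \(\wt P = Q_\lambda^{1/2}\wt K Q_\lambda^{1/2}\), where \(\wt K = K\otimes \mathrm{id}_{S_2^d}\). By the tensor identity of the setup, \(\norm{\wt K}\le 1\); the case \(\lambda=0\) is handled as in Lemma~\ref{lem:op-bound}. I also use \(Q_\lambda^{1/2}\bfI=\bfI\). Writing \(M_t:=M_{t,\phi}(\theta)\), the inner product becomes
\[
\la \bfI,\, D_1\wt K D_2\wt K\cdots\wt K D_n\bfI\ra,
\qquad
D_t := (Q_\lambda^{1/2}M_t^{1/2})\,M_{U_t}\,(M_t^{1/2}Q_\lambda^{1/2}).
\]
Each factor satisfies \(\norm{D_t}\le\norm{M_t^{1/2}Q_\lambda^{1/2}}^2 = \norm{M_t^{1/2}Q_\lambda M_t^{1/2}}\), by the \(C^\ast\)-identity used in Lemma~\ref{lem:op-bound}. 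Applying Cauchy--Schwarz with \(\norm{\bfI}_{\cH_d}^2=d\) then gives the claim with the prefactor \(d\).

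The main obstacle is the second step. One has to verify carefully that the non-self-adjoint conjugation \(E_t(x)\) splits into the positive part \(\exp(\theta T_{t,\phi}(x))\) and a commuting unitary. It is also essential that the \(\phi\)-dependence enters the positive part only through \(\cos\phi\). This is exactly what makes the resulting one-step norms the ones that Lemma~\ref{lem:matrix-one-step} is designed to bound.
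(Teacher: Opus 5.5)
Your proof is correct. It reaches the same product bound by organising the Feynman--Kac product differently from the paper.

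\textbf{The paper's route.} It splits each full-step conjugation \(Z\mapsto A_{t,\phi}ZA_{t,\phi}^\ast\) into two half-steps \(E_t\), built from \(\exp(\theta e^{i\phi}F_t/4)\). It then uses only the normality identity \(E_t^\ast E_t=E_tE_t^\ast=M_{t,\phi}(\theta)\). The product is grouped by transitions: each block \(E_t\wt P E_{t+1}\) is bounded by \(\bnorm{E_tQ_\lambda^{1/2}}\,\bnorm{Q_\lambda^{1/2}E_{t+1}}\). Normality identifies both orientations \(E_tQ_\lambda E_t^\ast\) and \(E_{t+1}^\ast Q_\lambda E_{t+1}\) with \(\norm{C_t}\). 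The endpoints need separate estimates, \(\norm{E_1^\ast\bfI}^2\le d\norm{C_1}\) and \(\norm{E_n\bfI}^2\le d\norm{C_n}\), so that every \(\norm{C_t}\) ends up with total exponent one.

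\textbf{Your route.} You write the full-step operator in polar form, \(M_{E_t}=M_t^{1/2}M_{U_t}M_t^{1/2}\). The commuting unitary phase comes from the \(\sin\phi\) part of \(e^{i\phi}L_F+e^{-i\phi}R_F\). You then group by sites exactly as in the scalar Lemma~\ref{lem:op-bound}, with \(D_t=Q_\lambda^{1/2}M_{E_t}Q_\lambda^{1/2}\). The phase drops out because \(\norm{M_{U_t}}\le 1\), and a single Cauchy--Schwarz with \(\norm{\bfI}_{\cH_d}^2=d\) handles both ends.

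\textbf{Comparison.} Your version makes the matrix proof the scalar proof verbatim, with one extra norm-one factor. It avoids tracking two orientations and treating the boundary terms separately. It also makes explicit that the one-step norm depends on \(\phi\) only through \(\cos\phi\); in the paper this fact sits inside the identity \(E_t^\ast E_t=M_{t,\phi}(\theta)\). The paper's route never names the phase and uses only normality of the half-step. Yours uses the commutation of \(L_F\) and \(R_F\), or abstractly the commuting polar decomposition of a normal operator. Both are equally elementary and give the identical bound with prefactor \(d\).

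A minor typo: \(A_{t_1}\cdots A_{t_n}\) in your first display should read \(A_1\cdots A_n\).
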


\begin{proof}
Fix \( \theta \) and \( \phi \in [-\pi/2, \pi/2] \). Denote \( M_t := M_{t,\phi}(\theta) \) and \( C_t := M_{t}^{1/2} Q_\lambda M_{t}^{1/2} \). For every \(t \geq 1\), define the two-sided multiplication operator \( E_{t}:\cH_d \to \cH_d \) by
\begin{equation}
(E_{t} G)(x)
:= \exp\!\left(\frac{\theta e^{i\phi}}{4}F_t(x)\right) \, G(x) \, \exp\!\left(\frac{\theta e^{i\phi}}{4}F_t(x)\right)^\ast,
\qquad G \in \cH_d.
\end{equation}
Since \(F_t(x)\) is self-adjoint,
\(E_{t} \) is normal and \( E_{t}^\ast E_{t} = E_{t} E_{t}^\ast = M_{t} \). Since \( E_t \) is normal, this also means
\begin{equation}
\label{eq:op-norm-eqs}
\bnorm{E_{t} Q_\lambda E_t^\ast}
= \bnorm{Q_\lambda^{1/2} E_t^\ast E_{t} Q_\lambda^{1/2}}
= \bnorm{Q_\lambda^{1/2} M_{t} Q_\lambda^{1/2}}
= \bnorm{\underbrace{M_{t}^{1/2} Q_\lambda M_{t}^{1/2}}_{C_t}}.
\end{equation}

\medskip \noindent
As in~\eqref{eq:feynman-kac}, by the Markov property,
\begin{align}
\cM_\phi(\theta)
&= \left\la \bfI,\, E_{1}^2
\wt P E_{2}^2 \cdots \wt P E_{n}^2 \bfI \right\ra_{\cH_d}
= \left\la E_{1}^\ast\bfI,\, \prod_{t=1}^{n-1} \left( E_{t} \wt P E_{t+1} \right) E_{n}\bfI \right\ra_{\cH_d} \\
&\leq \norm{E_{1}^\ast\bfI}_{\cH_d} \norm{E_{n}\bfI}_{\cH_d} \prod_{t=1}^{n-1} \bnorm{ E_{t} \wt P E_{t+1} }. \label{eq:matrix-mgf-product}
\end{align}
As in the scalar case~\eqref{eq:scalar-k-def}, define \( K: \cH_d \to \cH_d \) by
\begin{equation}
K := \begin{cases}
\Pi + \lambda^{-1}(\wt P-\Pi), & \lambda>0,\\
I, & \lambda=0.
\end{cases}
\end{equation}
As in Lemma~\ref{lem:op-bound}, \( \norm{K} = 1 \) and \( \wt P=Q_\lambda^{1/2}KQ_\lambda^{1/2} \). Consequently, 
\begin{align}
\bnorm{E_{t}\wt P E_{t+1}}
&= \bnorm{E_{t} Q_\lambda^{1/2} K Q_\lambda^{1/2} E_{t+1}} \\
& \leq \bnorm{E_{t} Q_\lambda^{1/2}} \cdot \bnorm{Q_\lambda^{1/2} E_{t+1}}
= \underbrace{\bnorm{E_{t} Q_\lambda E_{t}^\ast}^{1/2}}_{= \norm{C_t}^{1/2}}
\underbrace{\bnorm{E_{t+1}^\ast Q_\lambda E_{t+1}}^{1/2}}_{= \norm{C_{t+1}}^{1/2}}.
\label{eq:matrix-cross}
\end{align}
Finally, since \(\bfI\) is constant, \( Q_\lambda^{1/2}\bfI=\bfI \). Therefore,
\begin{equation}
\norm{E_{1}^\ast\bfI}_{\cH_d}^2
= \left\la E_{1}^\ast Q_\lambda^{1/2}\bfI, E_{1}^\ast Q_\lambda^{1/2}\bfI \right\ra_{\cH_d}
= \left\la \bfI, Q_\lambda^{1/2} M_1 Q_\lambda^{1/2}\bfI \right\ra_{\cH_d}
\leq \underbrace{\norm{\bfI}_{\cH_d}^2}_{= d} \underbrace{\bnorm{ Q_\lambda^{1/2} M_1 Q_\lambda^{1/2} }}_{= \norm{C_1}}.
\end{equation}
Similarly, \( \norm{E_{n}\bfI}_{\cH_d} \leq \sqrt d\, \norm{ C_n }^{1/2} \). Substituting these estimates into \eqref{eq:matrix-mgf-product}, each one-step operator norm appears with total exponent one, and hence
\begin{equation}
\cM_\phi(\theta)
\leq d\prod_{t=1}^n \norm{ C_t }
= d\prod_{t=1}^n \bnorm{ M_{t,\phi}^{1/2} Q_\lambda M_{t,\phi}^{1/2} },
\end{equation}
as claimed.
\end{proof}

\paragraph{Discussion.}
The lemma above reformulates the operator reduction of~\citet{neeman2024concentration} directly in the Hilbert--Schmidt space of matrices. In~\citet{neeman2024concentration}, the noncommutative MGF is first represented on a lifted tensor-product space using Kronecker products, after which the original lifted Markov operator is replaced by the corresponding Le{\'o}n--Perron operator. Indeed, this is the purpose of the symmetrization argument in their Lemma~5.1 and the subsequent reductions in Section~5, culminating in the product bound used in their Hoeffding proof, cf.\ their Eq.~(6.10). Lemma~\ref{lem:matrix-op-bound} reaches the same product-of-one-step-norms representation directly. The only structural input is the factorization \( \wt P=Q_\lambda^{1/2}KQ_\lambda^{1/2} \) with \( \norm{K} = 1 \), while working intrinsically on \(\cH_d\) avoids the explicit Kronecker-product representation. Thus, at this stage, the matrix structure is carried entirely by the multiplication operators, whereas the dependence of the chain enters exactly through the same Le{\'o}n--Perron operator \(Q_\lambda\) as in the scalar case.

\medskip
This also makes the relation with the scalar argument particularly transparent. In the scalar proof, the Feynman--Kac representation is followed by the same contraction factorization to obtain a product of norms \( \norm{M_{e^{\theta f_t}}^{1/2}Q_\lambda M_{e^{\theta f_t}}^{1/2}} \). Here \(L^2(\pi)\) is simply replaced by \(\cH_d := L^2(\pi;S_2^d)\), and the scalar multiplication by \(e^{\theta f_t/2}\) is replaced by the natural two-sided multiplication on the Hilbert--Schmidt space of matrices. Unlike the scalar case, the half-step operators \(E_{t}\) arising from the noncommutative MGF are generally not self-adjoint: the complex phase forces \( E_{t}^2 \neq E_{t}^\ast E_{t} \). Consequently, splitting the Feynman--Kac product produces the two orientations \(E_{t} Q_\lambda E_{t}^\ast\) and \(E_{t}^\ast Q_\lambda E_{t}\), whereas in the scalar case they coincide automatically. The additional symmetrization step is therefore precisely what converts this genuinely noncommutative product structure into the positive one-step operator \(M_{t,\phi}^{1/2} Q_\lambda M_{t,\phi}^{1/2}\). Once its modulus \(M_{t,\phi}\) is formed, the remainder of the argument is identical: each transition operator is absorbed through \(\wt P=Q_\lambda^{1/2} K Q_\lambda^{1/2}\), leaving a product of positive one-step Le{\'o}n--Perron operator norms. The sole additional prefactor \(d\) comes from the endpoint vector, \( \norm{I_d}_{S_2^d}^2=d \), in place of \(\norm{\mathbf 1}_{\pi}^2=1\) in the scalar case.

\medskip
\begin{lemma}[One-step operator bound]
\label{lem:matrix-one-step}
Under the setup of Theorem~\ref{thm:matrix-hoeffding}, for every \(t\), \(\theta>0\), and
\(\phi\in[-\pi/2,\pi/2]\),
\[
\bnorm{M_{t,\phi}^{1/2} Q_\lambda M_{t,\phi}^{1/2}}
\leq \exp\left( \frac{1+\lambda}{1-\lambda} \frac{\theta^2(b_t-a_t)^2}{8} \right).
\]
\end{lemma}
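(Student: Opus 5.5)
The plan is to repeat the proof of Lemma~\ref{lem:rank1-resolvent} on \(\cH_d\), with the rank-one perturbation replaced by a finite-rank one. Write \(T:=T_{t,\phi}\), \(m:=\exp(\theta T)\) (a positive-operator-valued map), and \(U := M_m^{1/2}J: S_2^d\to\cH_d\). Since \(\Pi = JJ^\ast\), we get
\[
B:=M_m^{1/2}Q_\lambda M_m^{1/2} = \lambda M_m + (1-\lambda)\,UU^\ast .
\]
For \(G\in\cH_d\) this gives \(\la G,BG\ra = \lambda\la G, M_m G\ra + (1-\lambda)\norm{U^\ast G}_{\HS}^2\). Here \(U^\ast G = \int m(x)^{1/2}G(x)\,\pi(dx)\).

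Fix \(r>\lambda\sup_x\norm{m(x)}\), so that \(W:=rI-\lambda M_m\) is positive and invertible on \(\cH_d\). The Schur-complement criterion (equivalently, Cauchy--Schwarz in the \(W\)-weighted inner product) says that \(B\preceq rI\) holds as soon as
\[
(1-\lambda)\,U^\ast W^{-1}U \preceq I_{S_2^d}.
\]
Concretely, \(\norm{U^\ast G}^2 = \norm{U^\ast W^{-1/2}W^{1/2}G}^2 \le \norm{U^\ast W^{-1}U}\,\la G,WG\ra\). Now
\[
U^\ast W^{-1}U=\pi(\psi_r(T)),\qquad \psi_r(y)=\frac{e^{\theta y}}{r-\lambda e^{\theta y}},
\]
where \(\psi_r(T(x))\) is defined by functional calculus and everything commutes pointwise in \(x\). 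So it suffices to show \((1-\lambda)\,\pi(\psi_r(T))\preceq I\).

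Next I would apply the scalar endpoint inequality through functional calculus. The spectrum of \(T(x)\) lies in \([\alpha,\beta]:=[a_t\cos\phi,\,b_t\cos\phi]\), and \(\psi_r\) is convex there. The chord bound \(\psi_r(y)\le \frac{\beta-y}{\beta-\alpha}\psi_r(\alpha)+\frac{y-\alpha}{\beta-\alpha}\psi_r(\beta)\) therefore lifts to the operator inequality \(\psi_r(T(x))\preceq c_0 I + c_1 T(x)\), since both sides are functions of the same self-adjoint operator. Integrating, and using \(\pi(T)=0\) (which follows from \(\pi(F_t)=0\) by linearity), gives
\[
(1-\lambda)\,\pi(\psi_r(T))\preceq (1-\lambda)\big[(1-\mu)\psi_r(\alpha)+\mu\psi_r(\beta)\big]I,\qquad \mu=\frac{-\alpha}{\beta-\alpha}.
\]
This is exactly the scalar quantity in \eqref{eq:target-after-convexity-simple}, with \(\pi(f)\) replaced by \(0\) and \([a,b]\) by \([\alpha,\beta]\). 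The argument of Lemma~\ref{lem:rank1-resolvent} from \eqref{eq:target-after-convexity-simple} onward applies verbatim: the quadratic inequality, weighted AM--GM and the Bernoulli Hoeffding lemma. Proposition~\ref{prop:cs-weights-positive} supplies admissibility. With \(r=\exp\big(\frac{1+\lambda}{1-\lambda}\frac{\theta^2(\beta-\alpha)^2}{8}\big)\) this yields \(B\preceq rI\).

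Finally, \((\beta-\alpha)^2=\cos^2\phi\,(b_t-a_t)^2\le (b_t-a_t)^2\), which gives the stated bound uniformly in \(\phi\). The degenerate case \(\cos\phi=0\) is trivial, since then \(m\equiv I\) and \(\norm{B}=\norm{Q_\lambda}=1\).

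The main obstacle I expect is the Schur-complement step. It requires identifying \(U^\ast W^{-1}U\) correctly as the \(\pi\)-average of the pointwise functional calculus, checking that \(W^{-1}\) acts pointwise, and checking that the operator-valued integrals are well defined. A further point is that the convexity lift needs only the scalar chord inequality on a common spectral interval, not operator convexity. Admissibility of \(r\) also needs care: Proposition~\ref{prop:cs-weights-positive} is applied with an interval of width \(\beta-\alpha\) containing \(0=\pi(f)\), so its hypotheses are met.
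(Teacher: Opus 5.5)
Your proposal is correct and follows the paper's proof essentially step for step. The steps match: the finite-rank decomposition via \(\Pi = JJ^\ast\); the Schur-complement criterion, whose needed direction you verify by the weighted Cauchy--Schwarz bound, reducing everything to \((1-\lambda)\pi[\psi_r(\bfT)] \preceq I\); the chord inequality for \(\psi_r\) lifted by functional calculus and averaged using \(\pi(\bfT)=0\); the scalar endpoint argument of Lemma~\ref{lem:rank1-resolvent} with Proposition~\ref{prop:cs-weights-positive} for admissibility; and finally \(\cos^2\phi \le 1\). Your remark that \(M_m^{1/2}\) and \(W^{-1}\) commute because they are pointwise functions of the same self-adjoint \(\bfT(x)\), and not merely because they are multiplication operators, is slightly more precise than the paper's wording.
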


\begin{proof}
Fix \(t\), \(\theta>0\), and \(\phi\in[-\pi/2,\pi/2]\), and write
\begin{equation}
c:=\cos \phi \in [0,1],
\qquad \bfT:=T_{t,\phi},
\qquad a:=c a_t,
\qquad b:=c b_t,
\qquad \mathbf M:=M_{e^{\theta \bfT}}.
\end{equation}
If \(c=0\), equivalently \( \phi = \pm \pi/2\), then \(\bfT=0\), \(\mathbf M=I\), and
\begin{equation}
\bnorm{\mathbf M^{1/2}Q_\lambda \mathbf M^{1/2}}
=\bnorm{Q_\lambda}
=1,
\end{equation}
so the claimed bound is immediate. Hence, in the remainder, assume \(c>0\), and therefore \(b-a=c(b_t-a_t)>0\).

\medskip
Since \(F_t(x)\) is self-adjoint and \( a_t I_d \preceq F_t(x) \preceq b_t I_d \), the operator \( \bfT(x)\) is self-adjoint on \(S_2^d\) and satisfies \( a I\preceq \bfT(x) \preceq b I \). Moreover, \(\pi(F_t)=0\) implies \( \pi(\bfT)=0 \). Hence
\begin{equation}
\label{eq:finite-rank-decom}
\mathbf M^{1/2} Q_\lambda \mathbf M^{1/2}
= \lambda \mathbf M+(1-\lambda)\mathbf M^{1/2} JJ^\ast \mathbf M^{1/2}.
\end{equation}
For \(r>\lambda e^{\theta b}\), set
\begin{equation}
D_r := rI - \lambda \mathbf M \succ 0.
\end{equation}
By the Schur-complement criterion,
\begin{equation}
\label{eq:op-iff}
\mathbf M^{1/2} Q_\lambda \mathbf M^{1/2}\preceq rI
\iff (1-\lambda) \mathbf M^{1/2} J J^\ast \mathbf M^{1/2} \preceq D_r
\iff (1-\lambda)J^\ast \mathbf M^{1/2} D_r^{-1}\mathbf M^{1/2} J \preceq I.
\end{equation}
Since \(\mathbf M\) and \(D_r\) are multiplication operators, \( \mathbf M^{1/2}\) and \( D_r \) commute; therefore
\begin{equation}
J^\ast \mathbf M^{1/2} D_r^{-1} \mathbf M^{1/2} J
= J^\ast \mathbf M D_r^{-1} J
= \pi\left[ \frac{e^{\theta \bfT}} {rI-\lambda e^{\theta \bfT}} \right]
= \pi\bigl[\psi_r(\bfT)\bigr],
\end{equation}
where \( \psi_r \) is given by~\eqref{eq:psi-r-def}, so that, via functional calculus for the self-adjoint operator \( \bfT(x) \),
\begin{equation}
\psi_r(\bfT(x))
= e^{\theta \bfT(x)} \big(r I - \lambda e^{\theta \bfT(x)}\big)^{-1}
\end{equation}
As in the scalar case, \(\psi_r\) is convex on \([a,b]\). Therefore the scalar inequality~\eqref{eq:psi-r-fx-convexity} lifts by functional calculus:
\begin{equation}
\psi_r(\bfT)
\preceq \frac{bI-\bfT}{b-a}\psi_r(a) + \frac{\bfT-aI}{b-a}\psi_r(b).
\end{equation}
Taking the expectation and using \(\pi(\bfT)=0\),
\begin{equation}
\pi[\psi_r(\bfT)]
\preceq \left[ \frac{b}{b-a}\psi_r(a) + \frac{-a}{b-a}\psi_r(b) \right]I.
\end{equation}
The quantity in brackets is exactly the same two-endpoint expression as in the scalar proof. Set
\begin{equation}
r
:= \exp\left( \frac{1+\lambda}{1-\lambda} \frac{\theta^2(b-a)^2}{8} \right).
\end{equation}
Integrating \(aI\preceq \bfT(x)\preceq bI\) and using \(\pi(\bfT)=0\) gives \(a \le 0 \le b\). Therefore, the scalar calculation in Proposition~\ref{prop:cs-weights-positive}, with mean zero, verifies \(r>\lambda e^{\theta b}\). The scalar argument gives \( (1-\lambda)\pi[\psi_r(\bfT)]\preceq I \). Consequently, by~\eqref{eq:op-iff}, \( \mathbf M^{1/2} Q_\lambda \mathbf M^{1/2} \preceq rI \) and therefore
\begin{equation}
\bnorm{M_{t,\phi}^{1/2}Q_\lambda M_{t,\phi}^{1/2}}
= \bnorm{\mathbf M^{1/2} Q_\lambda \mathbf M^{1/2}}
\leq r
= \exp\left( \frac{1+\lambda}{1-\lambda} \frac{\theta^2\cos^2\phi\,(b_t-a_t)^2}{8} \right).
\end{equation}
Finally, \(\cos^2\phi\leq1\), which yields the claim.
\end{proof}

This completes the proof of Theorem~\ref{thm:matrix-hoeffding}. We present separately a consequence of the fact that the argument was carried out on a complex Hilbert space throughout.

\begin{corollary}[Complex Hermitian observables]
\label{cor:complex-hermitian}
The bound of Theorem~\ref{thm:matrix-hoeffding} holds for complex Hermitian observables \( F_t: \cX \to \bbH_d(\bbC) \) with the same dimensional prefactor and the same exponent as in the real symmetric case \( F_t: \cX \to \bbH_d(\bbR) \).
\end{corollary}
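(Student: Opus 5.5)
The plan is to observe that the proof of Theorem~\ref{thm:matrix-hoeffding} never used that the \(F_t(x)\) have real entries. So the corollary should follow by re-reading each step and checking that it holds verbatim for \(F_t:\cX\to\bbH_d(\bbC)\), without any realification or embedding into \(\bbH_{2d}(\bbR)\). Such an embedding is exactly where the factor \(2\) in~\citet{neeman2024concentration} comes from.

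I would go through the three ingredients in the order they appear.

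\emph{First, the Golden--Thompson reduction~\eqref{eq:matrix-gt}.} The bounded multi-matrix Golden--Thompson inequality of~\citet[Theorem~3.4]{garg2018matrix} is stated for complex Hermitian matrices. The Schatten estimate at exponent \(p=\pi/4\) is likewise insensitive to the field. I would check that the cited statement is indeed over \(\bbC\), and that \(\xi_n\) does not depend on the matrices. Then~\eqref{eq:matrix-gt} holds with the same prefactor \(d^{1-\pi/4}\).

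\emph{Second, Lemma~\ref{lem:matrix-op-bound}.} This lemma is formulated on \(\cH_d=L^2(\pi;S_2^d)\) with \(S_2^d=\bbC^{d\times d}\), which is already a complex Hilbert space. The facts used are:
\begin{itemize}
\item \(E_t\) is normal with \(E_t^\ast E_t=M_t\);
\item \(\wt P=Q_\lambda^{1/2}KQ_\lambda^{1/2}\) with \(\norm{K}=1\), which uses \(\norm{\wt P-\Pi}=\norm{P-\Pi}\) on the complexified space, as noted in the Setup of Section~\ref{sec:scalar};
\item \(\norm{\bfI}^2_{\cH_d}=d\).
\end{itemize}
None of these requires real entries. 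The one point to verify is normality of \(E_t\) for Hermitian \(F_t(x)\). Write \(W=\exp(\theta e^{i\phi}F_t(x)/4)\). Then \(W\) and \(W^\ast=\exp(\theta e^{-i\phi}F_t(x)/4)\) commute as functions of the same Hermitian matrix. Hence \(E_t^\ast E_t\) is the map \(G\mapsto W^\ast W G W^\ast W\), and \(W^\ast W=\exp(\tfrac{\theta\cos\phi}{2}F_t)\). This map equals \(\exp(\theta T_{t,\phi})\), because the left and right multiplications commute.

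\emph{Third, Lemma~\ref{lem:matrix-one-step}.} For Hermitian \(F_t(x)\), \(T_{t,\phi}(x)\) is self-adjoint on \(S_2^d\) with spectrum \(\{\tfrac{\cos\phi}{2}(\mu_i+\mu_j)\}\subset[\cos\phi\,a_t,\cos\phi\,b_t]\). The Schur-complement criterion and the functional-calculus lift of the scalar convexity bound are therefore unchanged. The final Chernoff optimization is identical.

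The main obstacle is bookkeeping rather than mathematics. One must confirm that no step silently relies on transposition-invariance or on a real structure. The two places to check are the identification \(\norm{\wt P-\Pi}=\norm{P-\Pi}\) under \(\cH_d\cong L^2(\pi)\otimes S_2^d\) and the exact hypotheses of the Golden--Thompson theorem in~\citet{garg2018matrix}. Both hold over \(\bbC\), so combining the three steps reproduces the displayed bound of Theorem~\ref{thm:matrix-hoeffding} with prefactor \(d^{2-\pi/4}\) and the same exponent.
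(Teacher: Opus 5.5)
Your proposal is correct and follows the paper's proof, which simply observes that the argument for Theorem~\ref{thm:matrix-hoeffding} already runs on the complex space \(\cH_d = L^2(\pi;S_2^d)\) with \(S_2^d=\bbC^{d\times d}\), and uses only self-adjointness of \(F_t(x)\), never real entries. Your step-by-step audit is a more detailed version of that same observation: it checks that the Golden--Thompson inequality of \citet{garg2018matrix} is stated for complex Hermitian matrices, and verifies explicitly that \(E_t\) is normal with \(E_t^\ast E_t = E_t E_t^\ast = M_{t,\phi}(\theta)\).
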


\begin{proof}
Immediate from the proof of Theorem~\ref{thm:matrix-hoeffding}: the argument is carried out on \( \cH_d = L^2(\pi; S_2^d) \) with \( S_2^d = \bbC^{d \times d} \), and no step uses that the entries of \( F_t(x) \) are real. Self-adjointness of \( F_t(x) \), which is what Lemma~\ref{lem:matrix-op-bound} and Lemma~\ref{lem:matrix-one-step} require, holds by assumption.
\end{proof}

\paragraph{Discussion.}
Lemma~\ref{lem:matrix-one-step} is the direct Hilbert--Schmidt analogue of the one-step operator estimate used in the Hoeffding proof of~\citet{neeman2024concentration}; in their notation, the resulting bound corresponds to Eq.~(6.11). Their argument reaches this estimate after passing through the lifted tensor-product representation and the auxiliary comparison results of Sections~5--6. Here the same conclusion follows directly from the finite-rank decomposition~\eqref{eq:finite-rank-decom} and a Schur-complement resolvent criterion. The subsequent reduction is purely scalar: the function~\eqref{eq:psi-r-def} is convex, and the endpoint interpolation inequality lifts to the self-adjoint operator \(\bfT\) by functional calculus. In particular, no operator convexity is required. The connection with the scalar proof is therefore exact. There, \(Q_\lambda\) is a rank-one perturbation of \(\lambda I\), while here \(\Pi=JJ^\ast\) has rank \(\dim S_2^d=d^2\); apart from this finite-rank enlargement, the resolvent argument is unchanged. After taking expectation and using \(\pi(\bfT)=0\), the matrix inequality reduces to precisely the same two-endpoint scalar expression as before, so the remainder of the proof is inherited verbatim from the scalar Hoeffding argument.

\section{Conclusions}

In conclusion, the argument we presented rests on the two key facts: the contraction factorization \( P = Q_\lambda^{1/2} K Q_\lambda^{1/2} \) with \( \norm{K} = 1 \), which is independent of the observables, and a one-step resolvent bound which, by Proposition~\ref{prop:secular}, is a scalar problem. We conclude with two open problems. The first is to determine whether the right-spectral-endpoint refinement is valid in full generality for nonreversible chains and, if so, whether it can be recovered from the present factorization. The second is to establish whether replacing the Hoeffding lemma in Lemma~\ref{lem:rank1-resolvent} by a Bernstein- or Bennett-type estimate yields the corresponding Markov-chain inequalities, and to identify the resulting constants.

\section*{Acknowledgments}
I would like to thank Lorenzo Rosasco for proofreading the manuscript.

\medskip \noindent
I acknowledge financial support of the European Commission (Horizon Europe grant ELIAS~101120237). The European Commission and the other organizations are not responsible for any use that may be made of the information it contains.

\bibliography{references}
\bibliographystyle{apalike}

\end{document}